\newtheorem{thm}{Theorem}[section]
\newtheorem{lem}[thm]{Lemma}
\newtheorem{prop}[thm]{Proposition}
\newtheorem{cor}[thm]{Corollary}
\theoremstyle{definition}
\newtheorem{de}[thm]{Definition}
\newtheorem*{rem*}{Remark}
\newtheorem{claim4}{Claim}[section]
\newtheorem*{rthm1.6}{Theorem~1.6}
\numberwithin{equation}{section}
\journal{JMAA (May 9, 2017; accepted July 23, 2017)}
\begin{document}

\begin{frontmatter}
\title{Chaotic dynamics of minimal center of attraction for a flow with discrete amenable phase group}

\author[label1,label2]{Zhijing Chen}
\ead{chzhjing@mail2.sysu.edu.cn}
\author[label1]{Xiongping Dai}
\ead{xpdai@nju.edu.cn}

\address[label1]{Department of Mathematics, Nanjing University, Nanjing 210093, People's Republic of China}
\address[label2]{Department of Mathematics, Zhongshan (Sun Yat-Sen) University, Guangzhou 510275, People's Republic of China}
\begin{abstract}
Let $G$ be a discrete infinite amenable group, which acts from the left on a compact
metric space $X$. In this paper, we study the chaotic
dynamics exhibited inside and near a minimal center of attraction of $(G,X)$
relative to any F{\o}lner net in $G$.
\end{abstract}

\begin{keyword}
Minimal center of attraction $\cdot$ Li-Yorke and Auslander-Yorke chaos $\cdot$ Amenable discrete group

\medskip
\MSC[2010] 37B20 $\cdot$ 37B05 $\cdot$ 54H20
\end{keyword}
\end{frontmatter}


\section{Introduction}\label{sec1}
Throughout let $(G,\cdot)$ be a discrete infinite {\it amenable} group. Let $(G,X)$ be a topological dynamical system, or \textit{$G$-system} for short, on a compact
metric space $(X,d)$ (cf.~$\S\ref{sec2.2}$ for the precise definition).
For any point $x\in X$, we shall call  $Gx =\{gx\,|\,g\in G\}$ the {\it orbit} of $x$ under the action of $G$.
We refer to any subset $\Lambda$ of $X$ as a {\it $G$-invariant set}
if $gx\in\Lambda$ for each $x\in\Lambda$ and any $g\in G$.
In dynamical systems, statistical mechanics and ergodic theory,
we often need to do with probability of sojourn of an
orbit $Gx$ in a given region $E$ of $X$.
This drives us to consider density in $(G,X)$.

In order to be more specific let us introduce some basic notation and definitions. First, recall that a net $\mathcal {F}=\{F_n\}_{n\in D}$ of finite subsets of $G$ is called
a (left) {\it F{\o}lner net} in $G$ if
\begin{gather*}
\lim_{n\in D}\frac{|gF_n\vartriangle F_n|}{|F_n|}=0\quad  \forall g\in G,
\end{gather*}
in the sense of net limit, where $|\cdot|$ is the counting measure on $G$.
Clearly every subnet $\{F_{n_k}\}_{k\in E}$ of a F{\o}lner net $\{F_n\}_{n\in D}$
is also a F{\o}lner net in $G$. Since $(G,\cdot)$ is assumed to be amenable here,
it always has a F{\o}lner net (cf.,~e.g., \cite{Argabright,Hindman-06}).

After choosing any F{\o}lner net $\mathcal {F}=\{F_n\}_{n\in D}$ in $G$,
for any subset $A\subseteq G$,
we can define respectively the {\it upper} and {\it lower density of $A$ relative to $\mathcal {F}$} by
\[\overline{d}_{\mathcal{F}}(A) = \sup\left\{\alpha\,|\,\forall m \in  D\, \exists n\geq  m \text{ s.t. }\frac{|A\cap F_n|}{|F_n|}\geq\alpha\right\}\quad
\textrm{and}\quad
\underline{d}_{\mathcal{F}}(A) = \sup\left\{\alpha\,|\,\exists m \in  D\textrm{ s.t. }\forall n\geq  m, \frac{|A\cap F_n|}{|F_n|}\geq\alpha\right\}.\]
If $\overline{d}_{\mathcal{F}}(A)=\underline{d}_{\mathcal{F}}(A)$,
then we call this value the {\it density of $A$ relative to $\mathcal {F}$} and denote it by $d_{\mathcal{F}}(A)$.
Following~\cite{Hindman-06} these notions are well defined.

Now, relative to a F{\o}lner net $\mathcal {F}=\{F_n\}_{n\in D}$ in $G$,
the {\it probability of sojourn of an
orbit} $Gx$ in a given region $E$ of $X$ is described by
\[P_x(E):=d_{\mathcal{F}}(\{g\in G\,|\,gx\in E\})\]
if the $d_{\mathcal{F}}$-density exists.
This motivates us to introduce the following concept for $G$-systems; see~\cite{Hilmy-36} for $\mathbb{R}$-systems and \cite{Dai-2} for $\mathbb{R}_+$-systems.

\begin{de}
Given any $x\in X$ and any F{\o}lner net $\mathcal {F}=\{F_n\}_{n\in D}$ in $G$,
a closed subset $C$ of $X$ is called an {\it $\mathcal {F}$-center of attraction of $x$}
if $P_x(B_\epsilon(C))= 1$ for all $\epsilon > 0$.
If the set $C$ does not admit any proper subset which
is likewise an $\mathscr{F}$-center of attraction of $x$,
then $C$ is called the {\it minimal $\mathcal {F}$-center of attraction of
$x$} and write $\mathcal{C}_{\mathcal{F}}(x)$.
 Here $B_\epsilon(C)$ denotes the $\epsilon$-neighborhood around $C$ in $X$.
\end{de}

Given any $x\in X$, by $\mathscr{U}_x$ we denote the neighborhood system of $x$ in $X$. Inspired by ~\cite{Nemytskii-60,Sigmund-77},
we will prove the following characterization of minimal center of attraction in $\S\ref{sec2.3}$:

\begin{lem}\label{lem1.2}
Given any $x\in X$ and any F{\o}lner net $\mathcal {F}=\{F_n\}_{n\in D}$ in $G$, there holds
\[{\mathcal{C}}_{\mathcal{F}}(x)=\left\{y\in X\,|\,\overline{d}_{\mathcal{F}}(\{g\in G\,|\,gx\in U\})>0\ \forall U\in\mathscr{U}_y\right\}.\]
Consequently, there always exists a minimal center of attraction of a point relative to any F{\o}lner net.
\end{lem}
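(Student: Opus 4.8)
The plan is to identify the right-hand side, which I will denote $W$, as the minimal center by showing two things: that $W$ is itself a nonempty closed $\mathcal{F}$-center of attraction of $x$, and that $W$ is contained in every $\mathcal{F}$-center of attraction of $x$. Together these force $W$ to be the unique minimal one and yield existence. Throughout I will use three elementary facts, all immediate from the observation that $\overline{d}_{\mathcal{F}}(A)$ and $\underline{d}_{\mathcal{F}}(A)$ are respectively the upper and lower limits of the net $|A\cap F_n|/|F_n|$: (i) finite subadditivity $\overline{d}_{\mathcal{F}}(A_1\cup\cdots\cup A_k)\le\sum_i\overline{d}_{\mathcal{F}}(A_i)$; (ii) the complementation identity $\overline{d}_{\mathcal{F}}(G\setminus A)=1-\underline{d}_{\mathcal{F}}(A)$; and (iii) monotonicity. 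For brevity write $N(E)=\{g\in G\,|\,gx\in E\}$ for $E\subseteq X$.

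First I would check that $W$ is closed. If $y\notin W$, then by definition there is an open $U\in\mathscr{U}_y$ with $\overline{d}_{\mathcal{F}}(N(U))=0$; since $U$ is a neighborhood of each of its points, every $y'\in U$ also violates the defining condition, so $U\subseteq X\setminus W$. Hence $X\setminus W$ is open and $W$ is closed.

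Next, and this is the core step, I would show that $W$ is an $\mathcal{F}$-center of attraction, i.e.\ $P_x(B_\epsilon(W))=1$ for every $\epsilon>0$. Fix $\epsilon>0$ and consider the compact set $K_\epsilon=X\setminus B_\epsilon(W)$. Every $z\in K_\epsilon$ lies outside $W$, hence carries an open neighborhood $U_z$ with $\overline{d}_{\mathcal{F}}(N(U_z))=0$. Extracting a finite subcover $U_{z_1},\dots,U_{z_k}$ of $K_\epsilon$ gives $N(K_\epsilon)\subseteq\bigcup_i N(U_{z_i})$, so by monotonicity and subadditivity $\overline{d}_{\mathcal{F}}(N(K_\epsilon))\le\sum_i\overline{d}_{\mathcal{F}}(N(U_{z_i}))=0$. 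Since $N(K_\epsilon)=G\setminus N(B_\epsilon(W))$, the complementation identity then forces $\underline{d}_{\mathcal{F}}(N(B_\epsilon(W)))=1$, whence the density exists and $P_x(B_\epsilon(W))=1$. Running the same finite-cover argument over all of $X$ (using $\overline{d}_{\mathcal{F}}(G)=1$) shows $W\ne\emptyset$, so $W$ is a genuine nonempty closed center.

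Finally I would prove $W\subseteq C$ for an arbitrary $\mathcal{F}$-center of attraction $C$. Take $y\in W$ and suppose $y\notin C$; as $C$ is closed, $\delta:=d(y,C)>0$, and the open ball $U=B_{\delta/2}(y)$ is disjoint from $B_{\delta/2}(C)$, so $N(U)\subseteq G\setminus N(B_{\delta/2}(C))$. Because $C$ is a center, $\underline{d}_{\mathcal{F}}(N(B_{\delta/2}(C)))=1$, and the complementation identity gives $\overline{d}_{\mathcal{F}}(N(U))=0$, contradicting $y\in W$. Thus $W\subseteq C$. Combining the three steps, $W$ is a center contained in every center, so no proper subset of $W$ can be a center; $W$ is therefore the unique minimal one and $\mathcal{C}_{\mathcal{F}}(x)=W$, which in particular proves existence. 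The main obstacle is the core step: one must upgrade the pointwise vanishing of the upper density near $K_\epsilon$ to the global estimate $\overline{d}_{\mathcal{F}}(N(K_\epsilon))=0$, and this is precisely where the compactness of $K_\epsilon$ (to obtain a finite cover) must be combined with the finite subadditivity of $\overline{d}_{\mathcal{F}}$.
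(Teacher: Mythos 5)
Your proof is correct, but it follows a genuinely different route from the paper's. The paper works with $\mathcal{C}_{\mathcal{F}}(x)$ as a given object and proves two inclusions with the set $I(x)$ (your $W$): for $\mathcal{C}_{\mathcal{F}}(x)\subseteq I(x)$ it argues that if some $z\in\mathcal{C}_{\mathcal{F}}(x)$ had a neighborhood $B_{3\epsilon}(z)$ whose hitting-time set has zero upper density, then $\mathcal{C}_{\mathcal{F}}(x)\setminus B_{2\epsilon}(z)$ would still be an $\mathcal{F}$-center of attraction, contradicting minimality; the reverse inclusion is proved essentially as in your third step, by playing the positive upper density of $N(x,B_\epsilon(z))$ against the density-one set $N(x,B_\epsilon(\mathcal{C}_{\mathcal{F}}(x)))$, these two sets being disjoint. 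Your argument never presupposes that a minimal center exists: you show directly that $W$ is a nonempty closed $\mathcal{F}$-center of attraction (the compactness of $X\setminus B_\epsilon(W)$ combined with finite subadditivity of $\overline{d}_{\mathcal{F}}$ and the complementation identity), and that $W$ is contained in every center, so that $W$ is the unique minimal one. This buys you a self-contained proof of the ``consequently'' clause: strictly read, the paper's argument establishes only that the minimal center, \emph{if it exists}, equals $I(x)$, since its first inclusion already invokes minimality, so the existence assertion is left implicit there; your covering argument is exactly the missing ingredient. What the paper's approach buys in exchange is brevity: granting existence, the removal-of-a-ball argument bypasses the finite-subcover bookkeeping (though it, too, silently needs an estimate of the form $\underline{d}_{\mathcal{F}}(A\cup B)\le\underline{d}_{\mathcal{F}}(A)+\overline{d}_{\mathcal{F}}(B)$ to see that $\mathcal{C}_{\mathcal{F}}(x)\setminus B_{2\epsilon}(z)$ is still a center). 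The two proofs share their second halves, and both rely only on the elementary net-limit properties of $\overline{d}_{\mathcal{F}}$ and $\underline{d}_{\mathcal{F}}$ that you list.
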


It is well known that the the minimal center of attraction admits abundant dynamics for
$\mathbb{R}_+$-systems and $\mathbb{Z}_+$-systems;
see, e.g.,~\cite{Nemytskii-60,Sigmund-77, HZ-12,Dai-2}.
In this paper, we will discuss the chaotic behavior of ${\mathcal{C}}_{\mathcal{F}}(x)$ for $G$-systems.

For our convenience,
we first introduce the following two notions (see~\cite[Definitions 1.5 and 1.6]{Dai-2} for $\mathbb{R}_+$-systems):
\begin{itemize}
\item A $G$-invariant subset $\Lambda$ of $X$ is called Karl Sigmund generic ({\it S-generic} for short)
 if there exists some point $x\in\Lambda$ with $\Lambda={\mathcal{C}}_{\mathcal{F}}(x)$.

\item Given any $x,y\in X$,
we say that $(x,y)$ is an {\it F-chaotic pair} in $(G,X)$ if there can be found
sequences $\{l_n\}_1^{\infty}$, $\{r_n\}_1^{\infty}$,
$\{s_n\}_1^{\infty}$ and $\{t_n\}_1^{\infty}$ in $G$ such that
\begin{gather*}
\lim_{n\to+\infty}d(l_n x,y)=0,\quad \lim_{n\to+\infty}d(r_n x,y)>0\intertext{and}
\lim_{n\to+\infty}d(s_n x,s_n y)=0,\quad \lim_{n\to+\infty}d(t_n x,t_ny)>0.
\end{gather*}
\end{itemize}

In this paper, applying Lemma~\ref{lem1.2},
we will show that if ${\mathcal{C}}_{\mathcal{F}}(x)$ is not S-generic,
then the chaotic behavior occurs near ${\mathcal{C}}_{\mathcal{F}}(x)$; see
Theorems~\ref{thm3.2} and~\ref{thm3.4} in $\S\ref{sec3.1}$.
On the other hand, whenever ${\mathcal{C}}_{\mathcal{F}}(x)$ is S-generic and if it is non-minimal,
then chaotic dynamics exhibits in ${\mathcal{C}}_{\mathcal{F}}(x)$;
that is the following which is a consequence of Theorem~\ref{thm:not-generic-LY} in $\S\ref{sec3.2}$.

\begin{thm}\label{thm1.3}
If ${\mathcal{C}}_{\mathcal{F}}(x)$ is S-generic and itself is not a minimal
subset of $(G,X)$, then there exists an F-chaotic chair in ${\mathcal{C}}_{\mathcal{F}}(x)$.
\end{thm}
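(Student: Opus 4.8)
The plan is to use the S-generic hypothesis to recognize $C:=\mathcal{C}_{\mathcal{F}}(x)$ as the orbit closure of a single transitive point, and then to exhibit inside $C$ a single proximal-but-not-asymptotic pair that simultaneously witnesses all four required limits. First I would invoke S-genericity to fix a point $x'\in C$ with $\mathcal{C}_{\mathcal{F}}(x')=C$. By Lemma~\ref{lem1.2} every $y\in C$ has each neighborhood met by $Gx'$ with positive upper $\mathcal{F}$-density, so in particular $C\subseteq\overline{Gx'}$; conversely, since $C$ is closed and $G$-invariant (the invariance following from the left-translation invariance of $\overline{d}_{\mathcal{F}}$ along a F{\o}lner net) and $x'\in C$, we get $\overline{Gx'}\subseteq C$. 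Hence $C=\overline{Gx'}$, i.e.\ $x'$ is a transitive point of the subsystem $(G,C)$.

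Next I would produce the minimal point by passing to the Ellis (enveloping) semigroup $E(C)$ of $(G,C)$. Choosing a minimal idempotent $u\in E(C)$ and setting $p:=ux'$, the classical Ellis--Auslander theory guarantees that $p$ is an almost periodic point with $(x',p)$ proximal; and since $u$ is a limit of elements of $G$, we have $p\in\overline{Gx'}=C$. Thus $M:=\overline{Gp}\subseteq C$ is a minimal subset of $(G,C)$. As $C$ is assumed \emph{not} minimal, $M\subsetneq C$, so I may choose $w\in C\setminus M$; since $M$ is closed, $\delta:=d(w,M)>0$, and $w\neq p$ gives $d(w,p)>0$.

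I would then verify the four sequences for the pair $(x',p)$, both points lying in $C$. Proximality of $(x',p)$ yields $s_n\in G$ with $d(s_n x',s_n p)\to 0$, the third condition. Because $p\in C=\overline{Gx'}$, there are $l_n\in G$ with $l_n x'\to p$, the first condition. Transitivity furnishes $t_n\in G$ with $t_n x'\to w$; then $d(t_n x',p)\to d(w,p)>0$ lets me take $r_n:=t_n$ for the second condition, while $t_n p\in M$ forces $d(t_n x',t_n p)\ge d(t_n x',M)\to d(w,M)=\delta>0$, which after passing to a convergent subsequence gives the fourth condition. Hence $(x',p)$ is an F-chaotic pair in $C$.

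I expect the real obstacle to be the proximality input, namely the existence of an almost periodic point $p$ proximal to the transitive point $x'$: this is precisely where the minimal-idempotent (enveloping-semigroup) machinery seems unavoidable. By contrast, the non-asymptotic half is cheap, since any $w\in C\setminus M$ keeps a fixed distance $\delta>0$ from the invariant set $M$ that contains the entire orbit of $p$. A secondary point needing only routine care is the passage from the nets supplied by $\overline{Gx'}$ and by proximality to honest sequences $\{l_n\},\{r_n\},\{s_n\},\{t_n\}$, which is harmless because $C$ is compact metric and all the relevant limits can be realized along subsequences.
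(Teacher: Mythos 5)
Your proposal is correct, and its engine is the same as the paper's: S-genericity yields a point $x'$ whose orbit closure equals $\mathcal{C}_{\mathcal{F}}(x)$ (this is Corollary~\ref{cor2.9}), and a minimal idempotent in the enveloping semigroup produces an almost periodic point $p$ lying in a minimal set $M\subseteq\mathcal{C}_{\mathcal{F}}(x)$ and proximal to $x'$ --- the paper does exactly this, working in $\beta G$ via Lemmas~\ref{mini-point-equ} and~\ref{prox-point-equ} together with Furstenberg's Proposition~8.6, inside the proof of Theorem~\ref{thm:not-generic-LY}. Where you genuinely diverge is in the two separation conditions. The paper gets non-asymptoticity abstractly: if $\{x',p\}$ were asymptotic, Proposition~\ref{prop:MCA-approx} would force $\mathcal{C}_{\mathcal{F}}(x')=\mathcal{C}_{\mathcal{F}}(p)$, which is impossible since the former is non-minimal while the latter sits inside the minimal set $M$; and it gets the condition $\lim_{n} d(r_nx',p)>0$ from Lemma~\ref{lem1.2}, by visiting with positive upper density a point at distance at least $\tfrac12\mathrm{diam}(\mathcal{C}_{\mathcal{F}}(x))$ from $p$. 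You instead pick $w\in\mathcal{C}_{\mathcal{F}}(x)\setminus M$, run $t_nx'\to w$ by transitivity, and use the $G$-invariance of $M$ to force $d(t_nx',t_np)\ge d(t_nx',M)\to d(w,M)>0$, so that a single sequence witnesses both the second and the fourth conditions at once. Your finish is more elementary and self-contained (no appeal to Proposition~\ref{prop:MCA-approx} and no density estimate); what the paper's route buys is the stronger conclusion of Theorem~\ref{thm:not-generic-LY} --- the proximal partner can be placed in \emph{any} prescribed minimal set $\Delta$, and the separation constant can be taken to be $\tfrac12\mathrm{diam}(\mathcal{C}_{\mathcal{F}}(x))$ --- of which Theorem~\ref{thm1.3} is only the qualitative corollary.
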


Moreover, we shall study in $\S\ref{sec3.2}$ the Auslander-Yorke chaotic dynamics for any non-minimal $G$-system as follows:

\begin{thm}\label{thm1.4}
If $X$ is S-generic and not minimal, then $(G,X)$ is point transitive and one can find an $\epsilon>0$ such that
for any $\hat{x}\in X$, there exists a dense subset $S_\epsilon(\hat{x})$ of $X$
satisfying that for each $y\in S_\epsilon(\hat{x})$ there is a sequence $\{t_n\}_1^\infty$ in $G$
so that $\lim_{n\to+\infty}d(t_n x,t_n y)\geq\epsilon$.
\end{thm}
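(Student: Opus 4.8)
The plan is to settle point transitivity immediately and then reduce the separation statement to ordinary sensitivity. Since $X={\mathcal{C}}_{\mathcal{F}}(x)$ for some $x$, Lemma~\ref{lem1.2} gives $\overline{d}_{\mathcal{F}}(\{g:gx\in U\})>0$ for every $y\in X$ and every $U\in\mathscr{U}_y$; in particular $\{g:gx\in U\}\neq\emptyset$ for each nonempty open $U$, so $\overline{Gx}=X$ and $x$ is a transitive point. I would then observe that the clause ``there is a sequence $\{t_n\}$ with $\lim_n d(t_n\hat{x},t_ny)\geq\epsilon$'' is equivalent to $\sup_{g\in G}d(g\hat{x},gy)\geq\epsilon$ (choose $t_n$ so that $d(t_n\hat{x},t_ny)$ tends to this supremum, with $t_n$ constant if the supremum is attained). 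Hence the theorem reduces to producing a single $\epsilon>0$ such that, for every $\hat{x}$, the set $S_\epsilon(\hat{x})=\{y\in X:\sup_{g}d(g\hat{x},gy)\geq\epsilon\}$ is dense.

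This I would obtain from sensitivity of $(G,X)$. Suppose we have shown $(G,X)$ is sensitive with some constant $c>0$, and set $\epsilon=c/2$. Fix $\hat{x}\in X$ and a nonempty open $V$; sensitivity applied inside $V$ yields $y,y'\in V$ and $g\in G$ with $d(gy,gy')>c$, whence $d(g\hat{x},gy)+d(g\hat{x},gy')\geq d(gy,gy')>c$, so at least one of $y,y'$ lies in $V\cap S_\epsilon(\hat{x})$. As $V$ was arbitrary, $S_\epsilon(\hat{x})$ is dense. (The same computation applies verbatim if the intended separation is between the orbit of the fixed transitive point $x$ and $y$.) So everything comes down to proving that a non-minimal S-generic system is sensitive.

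The heart of the matter, and the step I expect to be the main obstacle, is this sensitivity. First, Følner-averaging of the empirical measures $\frac{1}{|F_n|}\sum_{g\in F_n}\delta_{gx}$ along $\mathcal{F}$, combined with the density lower bounds $\overline{d}_{\mathcal{F}}(\{g:gx\in U\})>0$ from Lemma~\ref{lem1.2} and a diagonal argument over a countable base of $X$, produces a $G$-invariant Borel probability measure $\mu$ of full support; thus $(G,X)$ is an E-system. I then argue by contradiction: if $(G,X)$ were not sensitive, then by the Auslander--Yorke dichotomy (valid for point-transitive actions on compact metric spaces) it would be almost equicontinuous, and the transitive point $x$ would be a point of equicontinuity. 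The implication to establish is that an almost equicontinuous point-transitive system carrying a full-support invariant measure is in fact (uniformly) equicontinuous; granting this, the action is isometric for a compatible metric, so every orbit closure is minimal and point transitivity forces $X$ itself to be minimal, contradicting the hypothesis. Hence $(G,X)$ is sensitive.

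The delicate points are concentrated in this last paragraph: the careful passage from the pointwise density estimates of Lemma~\ref{lem1.2} to a single full-support invariant measure, and the upgrade from ``almost equicontinuous $+$ full support'' to genuine equicontinuity. A measure-free alternative leading to the same contradiction is to exploit the recurrence built into Lemma~\ref{lem1.2} at $x$ directly: equicontinuity at $x$ turns each return time in $\{h:d(hx,x)<\rho\}$ into a uniform $\delta$-almost-period, and the positive upper $\mathcal{F}$-density of this return set should be boosted, through a Følner argument using that the set of uniform $\delta$-almost-periods is essentially closed under the group operation, to syndeticity; that would make $x$ almost periodic and $\overline{Gx}=X$ minimal, the same contradiction. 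Either way, once sensitivity is in hand the first two paragraphs finish the proof, and the realization of the separation along a genuinely escaping sequence, should one insist on $t_n\to\infty$, follows by feeding the separated pair into the recurrence of the transitive orbit.
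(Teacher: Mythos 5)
Your proposal is correct in substance but follows a genuinely different route from the paper. The paper proves Theorem~\ref{thm1.4} as the case ${\mathcal{C}}_{\mathcal{F}}(x)=X$ of Theorem~\ref{thm:generic-AY}, and its proof is \emph{constructive}: it combines topological ergodicity (Lemma~\ref{lem3.6}, i.e.\ $N(U,V)$ syndetic, itself obtained from a F{\o}lner-averaged invariant measure) with thickness of $N(\hat{x},B_r(K))$ for a minimal set $K$ (Proposition~\ref{prop:invar-thick}), intersecting the syndetic set with the thick set repeatedly to build a nested sequence of open sets; any point of the intersection is then separated from the transitive point $tx\in U$ by $r$ along \emph{pairwise distinct} elements $g_n$, and the triangle inequality transfers the separation to an arbitrary $z$. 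You instead reduce the statement (legitimately, since its wording allows constant sequences, and you cover both readings of the typo $d(t_nx,t_ny)$ versus $d(t_n\hat{x},t_ny)$) to ordinary sensitivity via $\sup_{g}d(g\hat{x},gy)\ge\epsilon$ plus the same triangle trick, and then get sensitivity by contradiction through the Auslander--Yorke dichotomy: not sensitive $\Rightarrow$ the transitive point $x$ is an equicontinuity point $\Rightarrow$ $x$ is almost periodic $\Rightarrow$ $X=c\ell_X{Gx}$ is minimal by Lemma~\ref{mini-point-equ}, contradicting non-minimality. This Glasner--Weiss-style argument is sound for group actions: the dichotomy goes through because $\mathrm{Eq}_\epsilon=\{z\,|\,\exists\delta\ \forall y,y'\in B_\delta(z)\ \forall g,\ d(gy,gy')<\epsilon\}$ is open, nonempty when sensitivity fails, and $G$-invariant (invertibility of the action is used here, so a transitive point lies in every $\mathrm{Eq}_\epsilon$); and your measure-free ``route B'' does close the contradiction, since $A:=N(x,B_\rho(x))$ has positive upper $\mathcal{F}$-density by Lemma~\ref{lem1.2}, the almost-period sets $P_\delta=\{h\,|\,\sup_g d(ghx,gx)<\delta\}$ are symmetric with $P_\delta P_\delta\subseteq P_{2\delta}$ and $A\subseteq P_\delta$ by equicontinuity at $x$, and a positive-upper-density set $A$ has $AA^{-1}$ syndetic (a maximal family of pairwise disjoint left translates $g_iA$ has size at most $1/\overline{d}_{\mathcal{F}}(A)$, using left-translation invariance of the density along a subnet, and maximality gives $G=\bigcup_i g_i(AA^{-1})$), whence $N(x,B_{2\delta}(x))\supseteq AA^{-1}$ is syndetic for every $\delta$. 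The one real caveat is that your ``route A'' leaves its key implication (almost equicontinuous plus full-support invariant measure $\Rightarrow$ equicontinuous) unestablished; the quickest rigorous patch is to bypass it using the paper's own Lemma~\ref{lem3.6}: for an equicontinuity point $x$ and $U=B_\delta(x)$, any $g\in N(U,U)$ satisfies $d(gx,x)<\epsilon+\delta$, so syndeticity of $N(U,U)$ again makes $x$ almost periodic. In terms of what each approach buys: the paper's construction yields the stronger conclusion that the separating times can be taken pairwise distinct (a genuinely escaping sequence), which your sup-based reduction does not recover; your argument, in exchange, is shorter, isolates the conceptual obstruction (non-minimality versus equicontinuity), and in its route-B form needs no invariant measure at all.
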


Here ``point transitive'' and the following ``almost periodic point'' will be precisely defined in $\S\ref{sec2.2}$. Next we will further investigate the so-called $2$-sensitivity near ${\mathcal{C}}_{\mathcal{F}}(x)$
for any non-minimal $G$-system in $\S\ref{sec4}$.

\begin{thm}\label{thm1.5}
If ${\mathcal{C}}_{\mathcal{F}}(x)$ is S-generic non-minimal
and if almost periodic points of $(G,X)$ are dense inside ${\mathcal{C}}_{\mathcal{F}}(x)$,
then one can find two distinct points $x_1,x_2\in {\mathcal{C}}_{\mathcal{F}}(x)$
such that for any $\hat{x}\in {\mathcal{C}}_{\mathcal{F}}(x),
U\in \mathscr{U}_{\hat{x}}$, $U_1\in \mathscr{U}_{x_1}$, and $U_2\in \mathscr{U}_{x_2}$,
there exist $y_1,y_2\in U$ and $g\in G$ with
$gy_1\in U_1$ and $gy_2\in U_2$.
\end{thm}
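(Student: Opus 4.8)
The plan is to recast the desired property inside the product system $(G,X\times X)$ and then reduce it, by point-transitivity, to producing a single off-diagonal pair that is ``regionally reachable from the diagonal'' at one transitive point. Write $M:=\mathcal{C}_{\mathcal{F}}(x)$. First I would record that, since $M$ is S-generic, the generic point $x$ actually lies in $M$; and by Lemma~\ref{lem1.2} every neighbourhood of every $y\in M$ meets $Gx$, because positive upper $\mathcal{F}$-density forces nonemptiness of $\{g\in G\,|\,gx\in U\}$. As $M$ is closed and $G$-invariant and contains $x$, this gives $\overline{Gx}=M$, so $x$ is a transitive point of $(G,M)$. Next, reading the conclusion in $X\times X$, it asks exactly that $(x_1,x_2)\in\overline{G(U\times U)}$ for every $\hat{x}\in M$ and every $U\in\mathscr{U}_{\hat{x}}$, since the existence of $y_1,y_2\in U$ and $g\in G$ with $gy_1\in U_1,\,gy_2\in U_2$ is equivalent to $G(U\times U)\cap(U_1\times U_2)\neq\emptyset$ for all $U_1\in\mathscr{U}_{x_1},U_2\in\mathscr{U}_{x_2}$.

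The key reduction is that it suffices to arrange this for the single base point $\hat{x}=x$. Indeed, given any $\hat{x}\in M$ and $V\in\mathscr{U}_{\hat{x}}$, transitivity provides $h\in G$ with $hx\in V$, so $h^{-1}V\in\mathscr{U}_x$, and using $G$-invariance of $M$ one has $G\big((h^{-1}V\cap M)\times(h^{-1}V\cap M)\big)=Gh^{-1}\big((V\cap M)\times(V\cap M)\big)\subseteq G(V\times V)$; passing to closures transports the pair from $x$ to $\hat{x}$. Thus it is enough to find distinct $x_1,x_2\in M$ with $(x_1,x_2)\in\bigcap_{U\in\mathscr{U}_x}\overline{G\big((U\cap M)\times(U\cap M)\big)}$, where restricting the base points to $M$ automatically forces $x_1,x_2\in M$. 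I would produce such a pair by contradiction: if no off-diagonal pair lies in this intersection, then whenever $a_\alpha,b_\alpha\to x$ inside $M$ with $g_\alpha a_\alpha\to p$ and $g_\alpha b_\alpha\to q$ one must have $p=q$, and unwinding the quantifiers together with compactness of $X$ shows this is precisely equicontinuity of $(G,M)$ at the transitive point $x$.

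The engine of the proof is then to contradict equicontinuity at $x$ using the hypothesis that almost periodic points are dense in $M$. Fix $\epsilon>0$, take $\delta>0$ from equicontinuity at $x$ so that $d(a,x)<\delta$ implies $d(ga,gx)<\epsilon/3$ for every $g\in G$, and choose an almost periodic point $p\in M$ with $d(p,x)<\min\{\delta,\epsilon/3\}$. Since $p$ is almost periodic, the return set $\{g\in G\,|\,d(gp,p)<\epsilon/3\}$ is syndetic, and on it the triangle inequality yields $d(gx,x)\le d(gx,gp)+d(gp,p)+d(p,x)<\epsilon$. Hence $\{g\in G\,|\,d(gx,x)<\epsilon\}$ is syndetic for every $\epsilon>0$, so $x$ is almost periodic; but then $\overline{Gx}=M$ would be minimal, contradicting the non-minimality of $\mathcal{C}_{\mathcal{F}}(x)$. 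Therefore an off-diagonal pair exists, and taking it as $(x_1,x_2)$ and invoking the transitivity-spreading step of the previous paragraph finishes the proof.

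The hard part will be this last paragraph. One must ensure the approximating almost periodic point $p$ can genuinely be chosen inside $M$ (so that syndeticity of its return times is available through the standard characterisation of almost periodicity), and that the equicontinuity extracted at $x$ is uniform over all of $G$ rather than merely along the Følner net $\mathcal{F}$. It is exactly the density of almost periodic points, and not any averaging property of $\mathcal{F}$, that drives the contradiction; the role of $\mathcal{F}$ is confined to Lemma~\ref{lem1.2} and the identification of $x$ as a transitive point, after which the argument is purely topological-dynamical.
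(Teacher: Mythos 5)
Your proposal is correct, but it takes a genuinely different route from the paper's. The paper also reduces to the subsystem $(G,\mathcal{C}_{\mathcal{F}}(x))$ with a transitive point $y$ supplied by S-genericity and also transports the conclusion to arbitrary $\hat{x}$ by transitivity (this is the tail of Claim~\ref{cl4.5}), but its core is combinatorial: density of almost periodic points plus non-minimality yields two distinct minimal sets $M_1,M_2\subseteq\mathcal{C}_{\mathcal{F}}(x)$; then $N(U,B_\delta(M_1))$ is thick by Proposition~\ref{prop:invar-thick} (Claim~\ref{cl4.3}), while $N(U,B_\delta(M_2))$ is syndetic by topological ergodicity (Lemma~\ref{lem3.6}, whose proof builds a $G$-invariant measure by F{\o}lner averaging), so the two return-time sets intersect, and a limiting argument produces $x_1\in M_1$, $x_2\in M_2$ with $y\in L(x_1,x_2)$. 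You replace this combinatorial core by an Auslander--Yorke-type dichotomy at the transitive point: either compactness produces an off-diagonal limit of diagonal $G$-images of pairs converging to the transitive point (which is exactly the desired pair, and your compactness/contrapositive argument for this is sound), or the transitive point is an equicontinuity point of the subsystem, in which case a nearby almost periodic point and the triangle inequality make the transitive point's return sets syndetic, so it is a minimal point by Lemma~\ref{mini-point-equ} and $\mathcal{C}_{\mathcal{F}}(x)$ would be minimal---a contradiction. Your route is more elementary and self-contained: it needs no invariant measure, no piecewise syndeticity, and confines the F{\o}lner net to Lemma~\ref{lem1.2} and Corollary~\ref{cor2.9}. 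The paper's route buys more structure: its two points sit in two distinct minimal sets, and (under commutativity) the same machinery scales to $n$-tuples in Theorem~\ref{thm1.6}, whereas your dichotomy produces a single off-diagonal pair with no evident extension to higher tuples (compare the second example in $\S\ref{sec5}$, which shows the naive intersection argument also breaks down beyond two sets).

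One inaccuracy should be repaired, though it costs nothing: S-genericity of $\mathcal{C}_{\mathcal{F}}(x)$ does \emph{not} assert that the original point $x$ lies in $\mathcal{C}_{\mathcal{F}}(x)$; it asserts only that there is some $y\in\mathcal{C}_{\mathcal{F}}(x)$ with $\mathcal{C}_{\mathcal{F}}(y)=\mathcal{C}_{\mathcal{F}}(x)$, and then $c\ell_X{Gy}=\mathcal{C}_{\mathcal{F}}(x)$ by Corollary~\ref{cor2.9}. Your entire argument should be run with this $y$ as the transitive base point; nothing else changes, since every hypothesis you actually use---compact $G$-invariant set, point transitivity of the subsystem, dense almost periodic points, non-minimality---refers to $\mathcal{C}_{\mathcal{F}}(x)$ itself rather than to $x$.
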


Furthermore, if $G$ is commutative, then we can obtain a more stronger sensitivity to initial conditions as follows:

\begin{thm}\label{thm1.6}
Let $G$ be abelian. If ${\mathcal{C}}_{\mathcal{F}}(x)$ is S-generic non-minimal
and if almost periodic points of $(G,X)$ are dense inside ${\mathcal{C}}_{\mathcal{F}}(x)$,
then one can find an $\infty$-countable subset $K$ of ${\mathcal{C}}_{\mathcal{F}}(x)$
such that for any $\hat{x}\in {\mathcal{C}}_{\mathcal{F}}(x)$,
any $n$ distinct points $x_1,\ldots,x_n\in K$
with $n\geq 2$ and any $U\in \mathscr{U}_{\hat{x}}$, $U_i\in \mathscr{U}_{x_i}$,
there exist $n$ points $y_1,\ldots,y_n\in U$ and some $g\in G$ with
$gy_i\in U_i$ for all $1\leq i\leq n$.
\end{thm}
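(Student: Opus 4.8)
My plan is to pass to the subsystem $(G,\mathcal{C}_{\mathcal{F}}(x))$, reformulate the conclusion as the existence of a Cantor ``independence set'' for a family of closed relations, and then build that set by a Cantor--scheme recursion whose inductive step is powered by a multi-fold recurrence lemma in which the abelian hypothesis is used.

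Write $\mathcal{C}:=\mathcal{C}_{\mathcal{F}}(x)$. Since $\mathcal{C}$ is S-generic, Lemma~\ref{lem1.2} provides a point $x_0\in\mathcal{C}$ with $\mathcal{C}=\mathcal{C}_{\mathcal{F}}(x_0)$, and by that same lemma the orbit $Gx_0$ enters every neighborhood of every point of $\mathcal{C}$ with positive upper $\overline{d}_{\mathcal{F}}$-density; hence $(G,\mathcal{C})$ is point transitive. Together with the assumed density of almost periodic points, non-minimality, and $|G|=\infty$, this forces $\mathcal{C}$ to be perfect (an isolated point would make the dense transitive orbit discrete, whence $\mathcal{C}$ would be a single minimal orbit). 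For nonempty relatively open $V\subseteq\mathcal{C}$ and open $W$ set $N(V,W)=\{g\in G : gV\cap W\neq\emptyset\}$. Unwinding the statement, a tuple of distinct points $(x_1,\dots,x_n)$ satisfies the asserted property for \emph{every} $\hat{x}$ precisely when $\bigcap_{i=1}^{n}N(V,U_i)\neq\emptyset$ for every such $V$ and all $U_i\in\mathscr{U}_{x_i}$; equivalently $(x_1,\dots,x_n)\in Q_n:=\bigcap_{V}\overline{G\,(V\times\cdots\times V)}$, a closed $G$-invariant subset of $\mathcal{C}^{n}$ (the product has $n$ factors). Point transitivity gives $Q_1=\mathcal{C}$, and, crucially, Theorem~\ref{thm1.5} says $Q_2$ contains a point off the diagonal. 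Thus it suffices to produce a countably infinite $K\subseteq\mathcal{C}$ with $K^{[n]}\subseteq Q_n$ for all $n\geq 2$, where $K^{[n]}$ is the set of tuples of $n$ distinct points of $K$.

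The heart of the argument is a \emph{simultaneous spreading lemma}: for every nonempty relatively open $V$ and for finitely many of the points produced in the construction, with pairwise disjoint neighborhoods $U_1,\dots,U_n$, one has $\bigcap_{i=1}^{n}N(V,U_i)\neq\emptyset$. This is the $n$-fold strengthening of Theorem~\ref{thm1.5}. I would prove it using (i) Lemma~\ref{lem1.2}, which makes each one-fold return set $\{g : gx_0\in U_i\}$ of positive upper density along $\mathcal{F}$; (ii) density of almost periodic points, which makes the relevant return sets syndetic; and (iii) commutativity: because $G$ is abelian, a return time steering $V$ into $U_i$ may be translated without disturbing the memberships already arranged for $U_1,\dots,U_{i-1}$, so the separate returns can be fused into one common $g$ by an amenable-group intersection argument (a syndetic set meets every set of positive upper density, iterated with commuting translations). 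Granting this lemma, I build $K$ as the branch points of a Cantor scheme $\{A_s\}_{s\in 2^{<\omega}}$ of shrinking nonempty relatively open subsets of $\mathcal{C}$, splitting each $A_s$ into disjoint pieces $A_{s0},A_{s1}$ of small diameter: perfectness of $\mathcal{C}$ gives room to split, while the spreading lemma keeps every finite tuple of current pieces inside the corresponding $\overline{G\,(V\times\cdots\times V)}$ for a cofinal family of $V$. Since each $Q_n$ is closed, passing to the limit points $p_b=\bigcap_k A_{b\restriction k}$ yields $(p_{b_1},\dots,p_{b_n})\in Q_n$ for all distinct branches, and $K=\{p_b\}$ is the desired $\infty$-countable set.

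The main obstacle is exactly the spreading lemma: turning $n$ individually guaranteed, ``large'' return sets into one common return time. Point transitivity alone yields only $Q_1=\mathcal{C}$, and weak mixing, which would give $Q_n=\mathcal{C}^{n}$ outright, is not available here; one must instead exploit the precise largeness of the return sets furnished by Lemma~\ref{lem1.2} and by the almost periodic points, and it is the commutativity of $G$ that permits these to be combined coordinate by coordinate. This is precisely what upgrades the two-point conclusion of Theorem~\ref{thm1.5} to all orders $n$ and thereby singles out where the abelian hypothesis is indispensable. A secondary, purely technical point is the bookkeeping in the Cantor-scheme recursion, namely ensuring that the introduction of new branches never destroys the goodness of previously formed tuples; this is controlled by the closedness of the $Q_n$ together with choosing split points sufficiently deep in the scheme.
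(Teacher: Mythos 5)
Your proposal reduces everything to the ``simultaneous spreading lemma,'' and that lemma is exactly where the proof breaks: you never prove it, and the mechanism you sketch for it is false. The parenthetical principle you invoke --- ``a syndetic set meets every set of positive upper density'' --- fails already in $\mathbb{Z}$: the even integers are syndetic, the odd integers have density $\tfrac12$, and they are disjoint. More seriously, the whole idea of fusing the $n$ individual return conditions by intersecting largeness notions is precisely the trap that this paper's own Example in $\S 5.2$ is designed to expose: there exist three subsets of $\mathbb{Z}$, each simultaneously syndetic \emph{and} thick, whose triple intersection is empty. So no soft argument of the form ``each $N(V,U_i)$ is large, hence they have a common element'' can work, even for abelian $G$; the common $g$ has to be produced by a finer algebraic device, and commutativity enters there, not through an intersection property of large sets.

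The paper's proof gets the common return time in a genuinely different way, and for carefully chosen points only (not, as your lemma would require, for arbitrary tuples produced by a Cantor scheme). It takes infinitely many \emph{distinct minimal sets} $M_1,M_2,\dots$ inside ${\mathcal{C}}_{\mathcal{F}}(x)$ (these exist by density of almost periodic points plus non-minimality) as the targets: by Proposition~\ref{prop:invar-thick} the sets $N(y,B_\delta(M_i))$ are \emph{thick}, while by Claim~4.1 (which rests on a van der Waerden--type lemma of Dai valid for commutative actions) $N(y,U)$ is \emph{piecewise syndetic}; Claim~4.4 then combines thickness with piecewise syndeticity through the explicit cancellation $f_ig_ig_{L^\ast}(f_ig_i)^{-1}=g_{L^\ast}$ --- this identity is where abelianness is used --- to produce one $g_{L^\ast}$ lying in all $N(B_\delta(M_i),U)$ simultaneously. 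The countable set $K$ is then extracted by a diagonal limit argument, with $x_i\in M_i$ obtained as limits of points $g_ny_{i,n}$; the spreading property holds for these limit points because each one sits in its own invariant minimal set, which is what made the return sets thick in the first place. Your points $p_b$, being generic Cantor-scheme limits with no invariance attached to them, carry no such thickness, so even granting perfectness of ${\mathcal{C}}_{\mathcal{F}}(x)$ (itself argued too quickly) the inductive step of your scheme has nothing to run on.
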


Although chaotic behavior possibly occurs \textit{near} a non-S-generic minimal center of attraction by Theorems~\ref{thm3.2} and \ref{thm3.4} in $\S\ref{sec3.1}$, yet we will construct an example in $\S\ref{sec5.1}$ to show that there might exist no chaotic dynamics \textit{in} a non-S-generic minimal center of attraction.

Since $\mathbb{R}_+$ and $\mathbb{Z}$ are commutative and so they are amenable under the discrete topology (cf.~\cite{Argabright,Hindman-06}), hence Theorems~\ref{thm1.3}, \ref{thm1.4}, \ref{thm1.5} and \ref{thm1.6} generalize the recent works \cite{X, zh-ye, Dai-2} for $\mathbb{Z}$- and $\mathbb{R}_+$-systems.

Finally, it should be noted that in general different F{\o}lner nets in $G$ may define different minimal centers of attraction of a same point of $X$; see Example~\ref{sec5.3} below.

\section{Preliminaries}\label{sec2}
In this section we will introduce some preliminaries needed in our discussion later on.

\subsection{Sets in a group}
Let $(G,\cdot)$ be a discrete group.
An {\it idempotent}
$t$ in $G$ is an element satisfying $t\cdot t=t$.
A subset $I$ of $G$ is called a {\it left ideal} of $G$ if $GI\subseteq I$,
a {\it right ideal} if $IG\subseteq I$, and a {\it two sided ideal} (or simply an ideal ) if it is
both a left and right ideal. A {\it minimal left ideal} is the left ideal that
does not contain any proper left ideal. Similarly, we can define {\it minimal
right ideal} and {\it minimal ideal}. An element in $G$ is called a {\it minimal
idempotent} if it is an idempotent in some minimal left ideal of $G$.
Each left ideal of every compact Hausdorff right-topological semigroup
contains some minimal left ideal and every minimal left ideal has an idempotent;
see, e.g.,~\cite{F-81,Hindman}.

A {\it filter} on $G$ is a nonempty collection $\mathcal {S}$ of subsets of $G$ with properties:
(1) if $A,B\in\mathcal {S}$, then $A\cap B\in\mathcal {S}$;
(2) if $A\in\mathcal {S}$ and $A\subset B\subset G$, then $B\in \mathcal {S}$;
(3) $\emptyset\notin \mathcal {S}$.
An {\it ultrafilter} on $G$ is a filter on $G$ which is not properly contained in any other filter on $G$.
We take the points of the
Stone-\v{C}ech compactification $\beta G$ of $G$ to be all the ultrafilters on $G$;
see e.g.~\cite{Hindman}.
Since $(G, \cdot)$ is a discrete group,
we can extend its operation $\cdot$ to $\beta G$ such that
$(\beta G, \cdot)$ is a compact Hausdorff right-topological semigroup; see e.g.~\cite[Theorem 4.1]{Hindman}.

By $K_{\beta G}$ we denote the unique minimal ideal of the compact Hausdorff semigroup $(\beta G,\cdot)$. If $A$ is a subset of $G$,
then $c\ell_{\beta G} A=\{p\in\beta G\,|\,A \in p\}$
is a base clopen subset of $\beta G$; see e.g.~\cite[Theorem 3.18]{Hindman}.
A subset $P\subseteq G$ is  called {\it syndetic}
if there is a finite set $F\subset G$ such that
$F^{-1}P={\bigcup}_{g \in F}g^{-1}P=G$;
it is {\it thick}
if for every finite subset $A\subseteq G$ there is some $t\in G$
such that $P\supseteq At$.
It is well known that
$P$ is syndetic if and only if $P$ intersects every thick set;
if $P\subseteq G$ is syndetic, then $tP$ is also syndetic for every $t\in G$;
 see, e.g.,~\cite{F-81,BHM,BM}.

\begin{lem}[{\cite[Theorem 2.9]{BHM}}]\label{thick-equi}
A subset $A\subseteq G$ is thick if and only if
$c\ell_{\beta G}A$ contains a left ideal of $\beta G$.
\end{lem}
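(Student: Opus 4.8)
The plan is to derive both implications directly from the defining formula for the extended product on $\beta G$: for $p,q\in\beta G$ and $E\subseteq G$ one has $E\in p\cdot q$ exactly when $\{x\in G : x^{-1}E\in q\}\in p$, and in particular, reading this at a principal ultrafilter $g\in G$, one gets the equivalence $E\in g\cdot p \iff g^{-1}E\in p$. Both directions will then hinge on the elementary reformulation of thickness: since $\bigcap_{g\in B}g^{-1}A=\{t\in G : Bt\subseteq A\}$ for every finite $B\subseteq G$, the set $A$ is thick if and only if the family $\{g^{-1}A : g\in G\}$ of left translates has the finite intersection property.

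For the implication that a contained left ideal forces thickness, I would fix a nonempty left ideal $L$ with $L\subseteq c\ell_{\beta G}A$ and pick any $p\in L$. Because $L$ is a left ideal and $G\subseteq\beta G$, for every $g\in G$ the product $g\cdot p$ lies again in $L\subseteq c\ell_{\beta G}A$, so $A\in g\cdot p$ and hence $g^{-1}A\in p$. Thus all left translates $g^{-1}A$ sit in the single ultrafilter $p$; for any finite $B\subseteq G$ the intersection $\bigcap_{g\in B}g^{-1}A$ then belongs to $p$, so it is nonempty, and any $t$ in it satisfies $Bt\subseteq A$. This is precisely thickness of $A$.

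For the converse, assuming $A$ thick, the finite intersection property of $\{g^{-1}A : g\in G\}$ noted above lets me extend this family to an ultrafilter $p\in\beta G$ with $g^{-1}A\in p$ for every $g\in G$. The decisive observation is that $\{x\in G : x^{-1}A\in p\}=G$, so for every $q\in\beta G$ the product $q\cdot p$ contains $A$; equivalently, the whole principal left ideal $\beta G\cdot p$ is contained in $c\ell_{\beta G}A$. Since $\beta G\cdot p$ is a nonempty left ideal (it contains $p=e\cdot p$ and satisfies $\beta G\cdot(\beta G\cdot p)\subseteq\beta G\cdot p$ by associativity), this exhibits the desired left ideal inside $c\ell_{\beta G}A$.

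I expect the one place requiring genuine care to be the bookkeeping of conventions: aligning the handedness of the translates in the definitions of thick and syndetic (here $g^{-1}P=\{x : gx\in P\}$) with the formula for the extended product, so that the equivalence $A\in g\cdot p\iff g^{-1}A\in p$ comes out with the correct orientation and the left ideal $\beta G\cdot p$ (rather than $p\cdot\beta G$) is the relevant object. Once that alignment is fixed, both implications are short; the converse is only marginally more substantial, in that it invokes the extension of a family with the finite intersection property to an ultrafilter instead of working from a point already given.
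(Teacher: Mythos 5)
Your proof is correct, but there is nothing in the paper to compare it against line by line: the paper does not prove this lemma at all, it simply imports it as \cite[Theorem~2.9]{BHM}. Your argument therefore supplies a self-contained justification, and it is the standard ultrafilter one. The key steps all check out against the paper's conventions: the identity $\bigcap_{g\in B}g^{-1}A=\{t\in G\,|\,Bt\subseteq A\}$ correctly converts the paper's definition of thickness (right translates $Bt\subseteq A$) into the finite intersection property of the family of left translates $\{g^{-1}A\,|\,g\in G\}$; the equivalence $A\in g\cdot p\iff g^{-1}A\in p$ is the correct specialization, at a principal ultrafilter, of the Hindman--Strauss product $A\in q\cdot p\iff\{x\in G\,|\,x^{-1}A\in p\}\in q$ that the paper adopts via \cite{Hindman}; and in the converse direction the observation $\{x\in G\,|\,x^{-1}A\in p\}=G$ immediately puts the whole set $\beta G\cdot p$ inside $c\ell_{\beta G}A$. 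Two small points are worth making explicit. First, in the forward direction the contained left ideal must be nonempty; this is the standard semigroup convention (and is what the lemma intends), since otherwise the statement is vacuous. Second, your ideal $\beta G\cdot p$ is in fact closed, being the image of the compact space $\beta G$ under the continuous right translation $q\mapsto q\cdot p$ of the right-topological semigroup $\beta G$, so you prove the formally stronger statement that $c\ell_{\beta G}A$ contains a closed left ideal; also, your use of the identity $e$ to get $p=e\cdot p\in\beta G\cdot p$ is fine here because $G$ is a group, whereas in the general semigroup setting of \cite{BHM} one would instead note that $\beta G\cdot p$ is nonempty because it contains $p\cdot p$.
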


A subset $P\subseteq G$ is  called {\it piecewise syndetic}
if there is a finite subset $F$ of $G$ satisfying that for every finite subset
$A$ of $G$ there is some $g\in G$ such that $Ag\subseteq F^{-1}P$;
it is {\it thickly syndetic} if for every finite subset
$A\subseteq G$ there is a syndetic set $Q \subseteq G$ such that $AQ\subseteq P$.
If $P\subseteq G$ is piecewise syndetic, then so is $tP$ for every
$t\in G$; see~\cite[Theorem 2.3]{BHM}.

\begin{lem}[{\cite[Theorem 4.40]{Hindman}}]\label{lem2.2}
$S\subseteq G$ is piecewise syndetic iff $K_{\beta G}\cap c\ell_{\beta G}S\not=\emptyset$.
\end{lem}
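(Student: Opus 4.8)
The plan is to prove this as the standard characterization from the algebra of $\beta G$, establishing both implications through the dictionary between thick/syndetic sets and (minimal) left ideals. The first thing I would record is a reformulation of the hypothesis: directly from the definitions of \emph{piecewise syndetic} and \emph{thick}, a set $S\subseteq G$ is piecewise syndetic precisely when there is a finite $F\subseteq G$ for which $F^{-1}S$ is thick. I would also set up two translation identities for the clopen base sets. For $a\in G$ one has $S\in a\cdot p\iff a^{-1}S\in p$, so $c\ell_{\beta G}(a^{-1}S)=\lambda_a^{-1}(c\ell_{\beta G}S)$, where $\lambda_a$ denotes (continuous) left multiplication by $a$; and since these base sets respect finite unions, $c\ell_{\beta G}(F^{-1}S)=\bigcup_{a\in F}\lambda_a^{-1}(c\ell_{\beta G}S)$. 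Throughout I would use that $K_{\beta G}$ is the union of the minimal left ideals of $\beta G$ and that every left ideal contains a minimal one.

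For the implication that piecewise syndetic forces $K_{\beta G}\cap c\ell_{\beta G}S\neq\emptyset$: pick a finite $F$ with $F^{-1}S$ thick. By Lemma~\ref{thick-equi}, $c\ell_{\beta G}(F^{-1}S)$ contains a left ideal, hence a minimal left ideal $L_0\subseteq K_{\beta G}$. Choosing any $p\in L_0$ gives $F^{-1}S\in p$, so by the ultrafilter property $a^{-1}S\in p$ for some $a\in F$, i.e. $S\in a\cdot p$. Since $L_0$ is a left ideal, $a\cdot p\in L_0\subseteq K_{\beta G}$, and therefore $a\cdot p\in K_{\beta G}\cap c\ell_{\beta G}S$, which is thus nonempty.

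For the converse: fix $p\in K_{\beta G}\cap c\ell_{\beta G}S$ and let $L=\beta G\,p$ be the (minimal) left ideal containing it. Using the union identity above, the inclusion $L\subseteq c\ell_{\beta G}(F^{-1}S)$ is equivalent to $F^{-1}T=G$ for $T:=\{y\in G\,|\,y^{-1}S\in p\}$, that is, to $T$ being syndetic; once this is in hand, Lemma~\ref{thick-equi} makes $F^{-1}S$ thick and hence $S$ piecewise syndetic. So everything reduces to showing that $T$ is syndetic. Noting that $c\ell_{\beta G}T=\rho_p^{-1}(c\ell_{\beta G}S)$, where $\rho_p$ is right multiplication by $p$, and that a set is syndetic as soon as its closure meets every minimal left ideal (otherwise its complement is thick, contradicting Lemma~\ref{thick-equi} together with the fact that a syndetic set meets every thick set), it suffices to produce, inside each minimal left ideal $L'$, a point $r$ with $r\,p\in c\ell_{\beta G}S$.

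The heart of the argument, and the step I expect to be the main obstacle, is this last structural claim in the compact right-topological semigroup $\beta G$. I would argue that $L'p$ is again a left ideal and is contained in the minimal left ideal $L=\beta G\,p$, whence $L'p=L$ by minimality; since $p\in L$, there is some $r\in L'$ with $r\,p=p$. Then $S\in p=r\,p$ yields $r\in L'\cap c\ell_{\beta G}T$, so $c\ell_{\beta G}T$ meets every minimal left ideal, $T$ is syndetic, and the proof closes. The delicate points are the correct handling of minimality under \emph{right} multiplication (which is exactly where right-topologicity of $\beta G$ enters) and keeping the left and right translations straight; these are the spots where a careless argument is most likely to break down.
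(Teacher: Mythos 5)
The paper does not prove this lemma at all: it is quoted from Hindman--Strauss (Theorem 4.40 of the cited book), just as Lemma~\ref{thick-equi} is quoted from Bergelson--Hindman--McCutcheon, so there is no internal argument to compare yours against. Your proof is correct, and it is in substance the standard $\beta G$-algebra proof of the cited theorem. The reformulation (piecewise syndetic iff $F^{-1}S$ is thick for some finite $F$) is immediate from the paper's definitions; the identities $c\ell_{\beta G}(a^{-1}S)=\lambda_a^{-1}(c\ell_{\beta G}S)$ and $c\ell_{\beta G}T=\rho_p^{-1}(c\ell_{\beta G}S)$ for $T=\{y\in G\,|\,y^{-1}S\in p\}$, as well as the equivalence of $L\subseteq c\ell_{\beta G}(F^{-1}S)$ with $F^{-1}T=G$, all come down to the definition of the product of ultrafilters together with the facts that an ultrafilter containing a finite union contains one of its members and that a subset of $G$ lies in every ultrafilter iff it equals $G$; and the key structural step---$L'p$ is a left ideal contained in the minimal left ideal $L=\beta G\,p$, hence $L'p=L$, hence some $r\in L'$ satisfies $rp=p$ and so $r\in L'\cap c\ell_{\beta G}T$---is sound. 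Two facts you invoke are external to the paper but standard and correctly used: that $K_{\beta G}$ is the union of the minimal left ideals (needed both to place $L_0$ inside $K_{\beta G}$ in the forward direction and to see that $\beta G\,p$ is minimal in the converse), and, stated in \S2.1 of the paper, that every left ideal of $\beta G$ contains a minimal one. The only blemish is the parenthetical justification of ``closure meets every minimal left ideal implies syndetic'': the clean route is that $T$ not syndetic makes $G\setminus T$ thick directly from the definitions, so Lemma~\ref{thick-equi} places a minimal left ideal inside $c\ell_{\beta G}(G\setminus T)=\beta G\setminus c\ell_{\beta G}T$, contradicting the hypothesis; your appeal to ``a syndetic set meets every thick set'' is a slight detour, but it is harmless and the intended contradiction is the right one.
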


Then this implies the following

\begin{cor}[{\cite[Theorem~1.24]{F-81}}]
Let $S=P_1\cup\dotsm\cup P_q$ be a finite partition of a piecewise syndetic subset $S$ of $G$. Then one of the cells $P_j$ is piecewise syndetic.
\end{cor}

\subsection{Topological dynamics}\label{sec2.2}
By a {\it G-system} $(G,X)$ we here mean that $X$ is a compact
metric space, and $(G,\cdot)$ is a discrete infinite amenable group such that $G$ consists of continuous transformations of $X$ with $e(x)=x$ and $gf(x)=g(f(x))$ for all $x\in X$ and $f,g\in G$. Here $e$ is the identity of $G$.
For any $x\in X$ and any subsets $U, V$ of $X$,
we write
\[N(U,V)=\{g\in G\,|\,U\cap g^{-1}V\neq\varnothing\}\quad \textrm{and}\quad
N(x,U)=\{g\in G\,|\,gx\in U\}.\]

A $G$-system $(G,X)$ is called {\it point transitive}
if there exists a point $y\in X$ such that
$Gy$ is dense in $X$ and such point $y$ is called a {\it transitive point};
it is {\it minimal} if $c\ell_X{Gx}=X\ \forall x\in X$.
A point $x\in X$ is called {\it minimal} if $c\ell_X{Gx}$ is minimal under $G$;
it is {\it almost periodic} (or {\it uniformly recurrent} in some literature like \cite{F-81,CD}) if $N(x,U)$ is syndetic in $G$ for every $U\in\mathscr{U}_x$.\\

\begin{prop}[{\cite[Proposition 3.2]{KM}}]\label{prop:point-tran-equi-trans}
Any $G$-system $(G,X)$ has a dense $G_\delta$-set of transitive points if and only if
it is point transitive.
\end{prop}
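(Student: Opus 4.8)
The plan is to prove the nontrivial implication, that point transitivity yields a dense $G_\delta$-set of transitive points, since the converse is immediate: a dense $G_\delta$-set of transitive points is in particular nonempty, so it contains a point whose orbit is dense, whence $(G,X)$ is point transitive.

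First I would fix a countable base $\{U_n\}_{n=1}^\infty$ for the topology of $X$, which exists because $X$ is compact metric and hence second countable. The key observation is that a point $y\in X$ is transitive precisely when $c\ell_X Gy=X$, equivalently when $Gy$ meets every basic open set $U_n$, equivalently when $N(y,U_n)\neq\varnothing$ for all $n$. Setting
\[W_n=\{y\in X\,|\,N(y,U_n)\neq\varnothing\}=\bigcup_{g\in G}g^{-1}U_n,\]
each $W_n$ is open (a union of the open preimages $g^{-1}U_n$, since every $g\in G$ is continuous on $X$), and the set of all transitive points equals $\bigcap_{n=1}^\infty W_n$, which is a $G_\delta$-set.

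Next I would show that each $W_n$ is dense. Let $z$ be a transitive point, which exists by hypothesis. I claim that $Gz\subseteq W_n$ for every $n$. Indeed, given $g\in G$, density of $Gz$ furnishes some $g'\in G$ with $g'z\in U_n$; then $h:=g'g^{-1}$ satisfies $h(gz)=(hg)z=g'z\in U_n$, so $N(gz,U_n)\neq\varnothing$ and hence $gz\in W_n$. Thus $W_n$ contains the dense set $Gz$ and is therefore itself dense.

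Finally, since $X$ is a compact metric space it is a Baire space, so the countable intersection $\bigcap_{n=1}^\infty W_n$ of dense open sets is dense. This exhibits the set of transitive points as a dense $G_\delta$-set, completing the argument. The only delicate point is the density of each $W_n$, where the group action is used to spread the orbit of a single transitive point across all of $X$; once this is in hand, the conclusion is a direct application of the Baire category theorem.
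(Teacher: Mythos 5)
Your proof is correct. There is, however, no internal proof to compare it with: the paper states this proposition only with the citation \cite[Proposition 3.2]{KM} and supplies no argument, so your write-up fills in exactly what the authors delegated to the literature. What you give is the standard Baire-category proof: the transitive points form the $G_\delta$-set $\bigcap_n W_n$ with $W_n=\bigcup_{g\in G}g^{-1}U_n$ open, and each $W_n$ is dense because it contains the dense orbit $Gz$ of a transitive point $z$. One point in your argument deserves emphasis, since it is where the structure of $G$ genuinely enters: to show $gz\in W_n$ you produce $h=g'g^{-1}$ with $h(gz)=g'z\in U_n$, and this uses the existence of inverses in $G$. For a general semigroup action, a point on the dense orbit of a transitive point need not itself be transitive, and the implication ``point transitive $\Rightarrow$ dense $G_\delta$ of transitive points'' can fail; the cited reference works in a semigroup setting where this requires care, whereas in the present paper $G$ is a group and your argument is valid verbatim. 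Two trivial touch-ups: take the countable base $\{U_n\}$ to consist of nonempty sets (otherwise ``$Gy$ meets every $U_n$'' does not characterize density), and you may note that Baire's theorem applies because $X$ is completely metrizable.
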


Notice that since in our setting the phase space $X$ is compact metric, the point transitive is equivalent to the topologically transitive.

For any $p\in \beta G$, we call $y\in X$ a {\it $p$-limit point}
of $x\in X$ if
$y=p\textrm{-}\lim_{g\in G}gx$; i.e., for all $U\in\mathscr{U}_y$,
$\{g\in G\,|\,gx\in U\}\in p$.
It is well known that for every $p\in \beta G$ and every $x\in X$,
$p\textrm{-}\lim_{g\in G}gx$ exists uniquely; see, e.g.,~\cite[Theorem 3.48]{Hindman}.

\begin{lem}[{\cite[Theorem 19.23]{Hindman}}]\label{mini-point-equ}
Let $(G,X)$ be a $G$-system and $x\in X$.
Then the followings are pairwise equivalent.
\begin{enumerate}
\item $x$ is a minimal point.
\item $x$ is almost periodic.
\item There exist some $y\in X$ and a minimal idempotent $p$ in $\beta G$ such that $p\textrm{-}\lim_{g\in G}gy=x$ (i.e. $py=x$).
\end{enumerate}
\end{lem}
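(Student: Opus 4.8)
\emph{Plan.} The plan is to prove the three conditions equivalent by establishing the cycle $(1)\Rightarrow(2)\Rightarrow(3)\Rightarrow(1)$, working throughout with the action of $\beta G$ on $X$ given by $qx:=q\text{-}\lim_{g\in G}gx$, which exists and is unique by the fact recorded before Lemma~\ref{mini-point-equ}. Two structural facts drive the argument, and I would record them first. (i) The map $q\mapsto qx$ is the continuous extension of $g\mapsto gx$, and the action satisfies $(qr)x=q(rx)$ (both follow from the universal property of $\beta G$ together with continuity of each $g\in G$); consequently $c\ell_X(Gx)=\beta G\,x=\{qx:q\in\beta G\}$, and for any left ideal $L$ of $\beta G$ the set $Lx$ is a closed $G$-invariant subset of $X$. (ii) If $q$ lies in a minimal left ideal $L$, then $\beta G\,q=L$ (it is a left ideal inside $L$), whence $c\ell_X\bigl(G(qx)\bigr)=\beta G\,qx=Lx$; thus every point of $Lx$ has orbit closure $Lx$, so $Lx$ is in fact a \emph{minimal} subset of $X$.

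For $(1)\Rightarrow(2)$ I would give a purely topological compactness argument. Put $M=c\ell_X(Gx)$ and fix $U\in\mathscr{U}_x$; minimality of $M$ makes every orbit dense, so $M\subseteq\bigcup_{g\in G}g^{-1}U$, and compactness of $M$ yields a finite $F\subseteq G$ with $M\subseteq\bigcup_{g\in F}g^{-1}U$. Then for each $h\in G$ the point $hx\in M$ lands in some $g^{-1}U$ with $g\in F$, i.e.\ $gh\in N(x,U)$; hence $F^{-1}N(x,U)=G$, so $N(x,U)$ is syndetic. As $U\in\mathscr{U}_x$ was arbitrary, $x$ is almost periodic.

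The step $(2)\Rightarrow(3)$ is where the real work lies, and I expect it to be the main obstacle, since it is here that the algebraic structure of $\beta G$ must be exploited. Consider $A_x:=\{p\in\beta G:px=x\}=\bigcap_{U\in\mathscr{U}_x}c\ell_{\beta G}(N(x,U))$, which is closed and is a subsemigroup because $p,q\in A_x$ gives $(pq)x=p(qx)=px=x$. Fixing any minimal left ideal $L$, the plan is to show $A_x\cap L\neq\varnothing$. Each $N(x,U)$ is syndetic, and I would first prove that a syndetic set $A$ has $c\ell_{\beta G}(A)$ meeting every left ideal: otherwise $L\subseteq\beta G\setminus c\ell_{\beta G}(A)=c\ell_{\beta G}(G\setminus A)$ would force $G\setminus A$ thick by Lemma~\ref{thick-equi}, contradicting that $A$ meets every thick set. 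Since $N(x,U_1)\cap\cdots\cap N(x,U_k)=N(x,U_1\cap\cdots\cap U_k)$ is again of this form, the closed sets $c\ell_{\beta G}(N(x,U))\cap L$ have the finite intersection property, and compactness of $L$ gives a point in $A_x\cap L$. Now $A_x\cap L$ is a closed subsemigroup of $\beta G$ (it is closed under the product, being the intersection of the subsemigroup $A_x$ with the left ideal $L$), so by the Ellis--Namakura idempotent lemma it contains an idempotent $p$. This $p$ lies in the minimal left ideal $L$, hence is a minimal idempotent, and $p\in A_x$ means $px=x$; thus $(3)$ holds with $y=x$.

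Finally $(3)\Rightarrow(1)$ follows from fact (ii): if $py=x$ with $p$ a minimal idempotent in a minimal left ideal $L$, then $x=py\in Ly$, and $Ly$ is a minimal subset of $X$. Since $c\ell_X(Gx)$ is then a nonempty closed $G$-invariant subset of the minimal set $Ly$ containing $x$, it must equal $Ly$, so $x$ is a minimal point; this closes the cycle. The delicate ingredients are exactly the ones isolated above: the compatibility $(qr)x=q(rx)$ and continuity of $q\mapsto qx$, the syndetic/thick duality packaged through Lemma~\ref{thick-equi}, and the idempotent lemma; the remaining verifications are routine.
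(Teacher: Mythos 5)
Your proof is correct, but there is nothing internal to compare it against: the paper does not prove this lemma at all --- it is imported verbatim from Hindman and Strauss (Theorem 19.23 of \cite{Hindman}), as the bracketed citation in the statement indicates. What you have done is reconstruct, in a self-contained way, essentially the standard argument from that book, and your cycle $(1)\Rightarrow(2)\Rightarrow(3)\Rightarrow(1)$ is sound: the open-cover compactness argument giving syndeticity of $N(x,U)$ (taking $U$ open costs nothing, since $N(x,\cdot)$ is monotone); the identification $A_x=\bigcap_{U\in\mathscr{U}_x}c\ell_{\beta G}(N(x,U))$ together with the syndetic/thick duality and Lemma~\ref{thick-equi} to show $A_x$ meets a fixed minimal left ideal $L$; the idempotent lemma applied to the closed subsemigroup $A_x\cap L$; and the observation that $Ly=\beta G(qy)$ is a minimal subset of $X$ whenever $L$ is a minimal left ideal and $q\in L$. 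Your argument even yields slightly more than the statement: in $(2)\Rightarrow(3)$ you obtain $y=x$, and $(3)\Rightarrow(1)$ uses only that $p$ lies in a minimal left ideal, not idempotency. One small overstatement: in your fact (i), $Lx$ need not be closed for an \emph{arbitrary} left ideal $L$ (left ideals of $\beta G$ need not be closed, and a union of the compact sets $\beta G(qx)$, $q\in L$, need not be closed); this is harmless because every $L$ you actually use is minimal, hence of the form $\beta G q$ and therefore compact, but the claim should be stated only in that generality. A point in favor of including such a proof: it uses only tools the paper already quotes elsewhere --- the clopen sets $c\ell_{\beta G}A$, Lemma~\ref{thick-equi}, the $p$-limit action, and the Numakura--Wallace--Ellis idempotent theorem invoked in the proof of Theorem~\ref{thm3.2} --- so the lemma could have been proved in the paper's own terms exactly along your lines.
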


\subsection{Characterization of minimal center of attraction}\label{sec2.3}
Let $(G,X)$ be a $G$-system and let $\mathcal{F}=\{F_n\}_{n\in D}$ be any F{\o}lner net in $G$.
We now will prove Lemma~\ref{lem1.2} stated in $\S\ref{sec1}$.

\begin{proof}[Proof of Lemma~\ref{lem1.2}]
Relative to the F{\o}lner net $\mathcal {F}=\{F_n\}_{n\in D}$ in $G$, for any point $x\in X$ we write
\[I(x)=\left\{y\in X\,|\,\overline{d}_{\mathcal{F}}(N(x,U))>0\ \forall U\in \mathscr{U}_y\right\}.\]
We first claim that ${\mathcal{C}}_{\mathcal{F}}(x)\subseteq I(x)$.
Indeed, for any $z\in {\mathcal{C}}_{\mathcal{F}}(x)$ and $U\in \mathscr{U}_z$, we have
$\overline{d}_{\mathcal{F}}(N(x,U))>0$;
otherwise, there would exist some $\epsilon>0$ such that
$\overline{d}_{\mathcal{F}}(N(x,B_{3\epsilon}(z)))=0$
which implies that
\[\lim_{n\in D}\frac{|N(x,B_{3\epsilon}(z))\cap F_n|}{|F_n|}=0.\]
Further ${\mathcal{C}}_{\mathcal{F}}(x)\setminus B_{2\epsilon}(z)$ is also an $\mathcal{F}$-center of attraction of $x$ for $(G,X)$.
And we thus arrive at a contradiction to the minimality
of ${\mathcal{C}}_{\mathcal{F}}(x)$.

Now, it is left to prove that ${\mathcal{C}}_{\mathcal{F}}(x)\supseteq I(x)$.
On the contrary, assume $z\in I(x)\setminus {\mathcal{C}}_{\mathcal{F}}(x)$ and then there exists $ \epsilon> 0$ such that
$d(z, {\mathcal{C}}_{\mathcal{F}}(x))\geq 3\epsilon$.
It is clear that
\[N(x, B_{\epsilon}(z))\cap N(x, B_{\epsilon}({\mathcal{C}}_{\mathcal{F}}(x)))=\emptyset.\]
However, by definitions, we have that
\begin{equation*}
\overline{d}_{\mathcal{F}}(N(x, B_{\epsilon}(z)))>0\quad
\textrm{and}\quad
d_{\mathcal{F}}(N(x,B({\mathcal{C}}_{\mathcal{F}}(x),\epsilon)))= 1.
\end{equation*}
This is a contradiction.

The proof of Lemma~\ref{lem1.2} is thus completed.
\end{proof}

To obtain some useful properties
of minimal center of attraction of $(G,X)$, we need some basic facts about F{\o}lner net in $G$.

\begin{lem}\label{lem2.6}
There exists a subnet $\{F_{n_k}\}_{k\in E}$ of $\mathcal{F}$ in $G$ such that $\lim_{k\in E}|F_{n_k}|=\infty$.
\end{lem}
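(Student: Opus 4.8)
The plan is to reduce the statement to a single combinatorial fact about the net of cardinalities $\{|F_n|\}_{n\in D}$: that they are \emph{cofinally unbounded}, meaning that for every $M\in\mathbb{N}$ the set $A_M:=\{n\in D\,|\,|F_n|>M\}$ is cofinal in $D$. Once this is in hand, a subnet along which $|F_{n_k}|\to\infty$ falls out from the standard interlacing construction; the real content lies in the cofinal-unboundedness, which is exactly where the hypothesis that $G$ is infinite is used. (Note that a F{\o}lner net consists of \emph{nonempty} sets, since otherwise the defining ratio is undefined, so $|F_n|\ge 1$ throughout.)

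First I would prove the cofinal-unboundedness by contradiction. If it failed for some $M$, then, $D$ being directed, there would be $n_0\in D$ with $1\le|F_n|\le M$ for all $n\ge n_0$. Since $G$ is infinite, I choose $M+1$ distinct elements $g_1,\dots,g_{M+1}$ and set $H=\langle g_1,\dots,g_{M+1}\rangle$, so that $|H|\ge M+1$. For each fixed $i$ the F{\o}lner condition gives $|g_iF_n\vartriangle F_n|/|F_n|\to 0$, and because $|F_n|\le M$ along the tail this forces $|g_iF_n\vartriangle F_n|\to 0$; being a net of nonnegative integers converging to $0$, it is eventually $0$, so $g_iF_n=F_n$ for all $n$ beyond some $m_i\ge n_0$. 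Picking $m^{\ast}\in D$ dominating $n_0$ and every $m_i$, for each $n\ge m^{\ast}$ the set $F_n$ is fixed by every generator, hence invariant under all of $H$; thus $Hf\subseteq F_n$ for any $f\in F_n$, giving $|F_n|\ge|H|\ge M+1>M$, contradicting $|F_n|\le M$. Hence each $A_M$ is cofinal.

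With this established I would extract the subnet. I order $E:=\{(n,M)\in D\times\mathbb{Z}_{\ge 0}\,|\,|F_n|>M\}$ coordinatewise; $E$ is directed precisely because each $A_M$ is cofinal (given two elements, dominate their first coordinates in $D$ and their second coordinates by some $M'$, then invoke cofinality of $A_{M'}$ to find an admissible index above both). The projection $\phi(n,M)=n$ is a final map $E\to D$, so $\{F_{\phi(n,M)}\}_{(n,M)\in E}$ is a subnet of $\mathcal{F}$; moreover, for any $L$, once $(n,M)\ge(n_1,L)$ with $(n_1,L)\in E$, one has $|F_n|>M\ge L$, whence $|F_{\phi(n,M)}|\to\infty$.

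The main obstacle is the cofinal-unboundedness step, and its engine is an elementary but crucial observation that needs no torsion or structural hypothesis on $G$: a finite set invariant under left multiplication by each of $g_1,\dots,g_{M+1}$ is invariant under the whole generated subgroup $H$, and therefore contains the coset $Hf$ of any of its points $f$, forcing its cardinality to be at least $|H|\ge M+1$. The two points to handle with care are that ``$|g_iF_n\vartriangle F_n|\to 0$'' legitimately upgrades to ``eventually $=0$'' — valid because it is a net valued in $\mathbb{Z}_{\ge 0}$ converging to $0$ — and that the finitely many resulting tails admit a common lower bound $m^{\ast}$, which is guaranteed by directedness of $D$.
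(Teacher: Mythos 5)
Your proof is correct and follows essentially the same route as the paper's: both argue by contradiction that the cardinalities $|F_n|$ are cofinally unbounded — picking $M+1$ distinct elements of $G$ and using the F{\o}lner condition together with the tail bound $|F_n|\le M$ to force $g_iF_n=F_n$, then deriving a cardinality contradiction — and both then assemble the desired subnet over a coordinatewise-ordered product with $\mathbb{N}$. The only cosmetic difference is in the final contradiction (the paper applies pigeonhole to the $M+1$ points $g_if\in F_n$, while you use invariance under the generated subgroup $H$ and the coset inclusion $Hf\subseteq F_n$); these amount to the same counting argument.
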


\begin{proof}
Let $E=D\times \mathbb{N}$ and direct $E$ by agreeing that $(m,k)\leq (m',k')$ if and only if $m\leq m'$ and $k\leq k'$.
We claim that for each $k\in\mathbb{N}$  and each $m\in D$
there exists $n(m,k)\in D$ with $n(m,k)\geq m$ such that $|F_{n(m,k)}|>k$
and then the net $\{F_{n(m,k)}\}_{(m,k)\in E}$ is the required.
Otherwise, there exist $M\in \mathbb{N}$ and $m\in D$ such that $|F_{n}|\leq M$ for all $n\geq m$.
Choose $M+1$ distinct points $g_1,g_2,\ldots,g_{M+1}$ in $G$.
Since $\{F_n\}_{n\in D}$ is a F{\o}lner net in $G$,
it follows that
\[\lim_{n\in D}\frac{|g_iF_n\vartriangle F_n|}{|F_n|}=0, \]
for all $i=1,2,\ldots,M+1$, which implies that there exists $m'\in D$ such that
\[\frac{|g_iF_n\vartriangle F_n|}{|F_n|}<\frac{1}{M+1}\]
for all $n\geq m'$ and $i=1,2,\ldots,M+1$.
Thus there exists $n\in D$ with $n\geq m$ and $n\ge m'$ such that
${|g_iF_n\vartriangle F_n|}/{|F_n|}<{1}/(M+1)$
for $i=1,2,\ldots,M+1$.
This implies that $g_iF_n=F_n$ for $i=1,2,\ldots,M+1$ as $|F_{n}|\leq M$.
By Pigeon house lemma, there exist $i,j$ with $i\neq j$ such that $g_i=g_j$.
This is a contradiction and so it completes the proof of Lemma~\ref{lem2.6}.
\end{proof}

The definition of upper
density of a subset of $G$ is sometimes not convenient to handle.
We provide the following property for it.

\begin{prop}\label{prop:sup-subnet}
Let $A$ be an arbitrary subset of $G$.
Then there exists a subnet $\{F_{n_k}\}_{k\in E}$ of $\mathcal{F}$ in $G$ such that
\begin{equation*}
\overline{d}_{\mathcal {F}}(A)=\lim_{k\in E}\frac{|F_{n_k}\cap A|}{|F_{n_k}|}.
\end{equation*}
\end{prop}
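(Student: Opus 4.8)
The plan is to identify the upper density $\overline{d}_{\mathcal{F}}(A)$ with the limit superior of the real-valued net $a_n := |F_n \cap A|/|F_n| \in [0,1]$, $n \in D$, and then to extract a subnet of $\mathcal{F}$ along which $a_n$ converges to that limit superior. Throughout, set $L := \inf_{m\in D}\sup_{n\geq m} a_n$, the limsup of the net $\{a_n\}_{n\in D}$.

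First I would verify that $\overline{d}_{\mathcal{F}}(A) = L$ directly from the definition of upper density. If $\alpha < L$, then $\sup_{n\geq m} a_n \geq L > \alpha$ for every $m\in D$, so for each $m$ there is $n \geq m$ with $a_n \geq \alpha$; thus every $\alpha < L$ lies in the defining set, giving $\overline{d}_{\mathcal{F}}(A) \geq L$. Conversely, if $\alpha > L$, then some $m\in D$ satisfies $\sup_{n\geq m} a_n < \alpha$, so no $n\geq m$ has $a_n\geq \alpha$ and $\alpha$ is excluded; hence $\overline{d}_{\mathcal{F}}(A)\leq L$. This yields the identity $\overline{d}_{\mathcal{F}}(A)=L$.

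Next I would construct the subnet following the template of Lemma~\ref{lem2.6}, indexed by $E = D\times\mathbb{N}$ with the product (componentwise) order. The two inequalities $L-\tfrac1k < a_{n(m,k)} < L + \tfrac1k$ must be arranged simultaneously. The lower bound is the easy half: since $\sup_{n\geq m} a_n \geq L$ for every $m$, one can always find indices $\geq m$ on which $a_n$ exceeds $L - \tfrac1k$. The upper bound is the delicate half and is where the infimum definition of $L$ is essential: for each $k$ choose $m_k\in D$ with $\sup_{n\geq m_k} a_n < L + \tfrac1k$, so that $a_n < L + \tfrac1k$ holds automatically for all $n\geq m_k$. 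For each pair $(m,k)$ I then pick an upper bound $m'$ of $\{m, m_k\}$ in the directed set $D$ and choose $n(m,k)\geq m'$ with $a_{n(m,k)} > L - \tfrac1k$; since $n(m,k)\geq m_k$ this index also satisfies $a_{n(m,k)} < L + \tfrac1k$, whence $|a_{n(m,k)} - L| < \tfrac1k$.

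Finally I would check that $\{F_{n(m,k)}\}_{(m,k)\in E}$ is a genuine subnet converging to $L$. Cofinality is immediate from $n(m,k)\geq m$: given $n_0\in D$, any $(m,k)\geq (n_0,1)$ forces $n(m,k)\geq m\geq n_0$. Convergence follows because, given $\epsilon>0$ and $k_0$ with $1/k_0<\epsilon$, every $(m,k)\geq(m_0,k_0)$ has $k\geq k_0$, so $|a_{n(m,k)}-L|<\tfrac1k\leq\tfrac1{k_0}<\epsilon$. Thus $\lim_{(m,k)\in E} |F_{n(m,k)}\cap A|/|F_{n(m,k)}| = L = \overline{d}_{\mathcal{F}}(A)$, as required. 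I expect the main obstacle to be the simultaneous control of the upper bound: the lower bound comes for free from cofinal frequency, but preventing the subnet from overshooting $L$ requires exploiting the infimum in the definition of the limit superior and threading both constraints through the product directed set.
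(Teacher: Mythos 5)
Your proposal is correct and is essentially the paper's own argument: both identify $\overline{d}_{\mathcal{F}}(A)$ with the limit superior of the net $a_n=|F_n\cap A|/|F_n|$, index the subnet by $E=D\times\mathbb{N}$ with the componentwise order, use the supremum definition to obtain an index $m_k$ beyond which $a_n<\overline{d}_{\mathcal{F}}(A)+\tfrac1k$, and then choose $n(m,k)$ dominating both $m$ and $m_k$ so that the frequent lower bound $a_{n(m,k)}>\overline{d}_{\mathcal{F}}(A)-\tfrac1k$ holds simultaneously. The only difference is presentational: you spell out the identification with the limsup and the cofinality/convergence checks, which the paper leaves implicit.
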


\begin{proof}
Suppose that
$\overline{d}_{\mathcal {F}}(A)=\alpha$.
Let $E=D\times \mathbb{N}$ and direct $E$ by agreeing that
 $(m,k)\leq (m',k')$ if and only if $m\leq m'$ and $k\leq k'$.
Fix any $k\in\mathbb{N}$,
there exists $m'\in D$ such that for every $n\in D$ with $n\geq m'$, there holds
\[(\alpha+\frac{1}{k})|F_{n}|>|F_{n}\cap A|;\]
and for each $m\in D$,
we can choose $n(m,k)\in D$ with $n(m,k)\geq m,m'$ both, such that
\[(\alpha+\frac{1}{k})|F_{n(m,k)}|>|F_{n(m,k)}\cap A|>(\alpha-\frac{1}{k})|F_{n(m,k)}|.\]
Then  $\{F_{n(m,k)}\}_{(m,k)\in E}$ is a subnet of $\{F_n\}_{n\in D}$ and satisfies that
\[\lim_{(m,k)\in E}\frac{|F_{n(m,k)}\cap A|}{|F_{n(m,k)}|}=\alpha.\]
This proves Proposition~\ref{prop:sup-subnet}.
\end{proof}

Given any $g\in G$, let $\mathcal{F}g=\{F_ng\}_{n\in D}$. Clearly, it is also a F{\o}lner net in $G$. As a result of Lemma~\ref{lem1.2} and Proposition~\ref{prop:sup-subnet}, we can obtain the following

\begin{cor}\label{cor2.8}
For any $x\in X$, ${\mathcal{C}}_{\mathcal{F}}(x)$ is $G$-invariant; moreover, ${\mathcal{C}}_{\mathcal{F}}(x)$=${\mathcal{C}}_{\mathcal{F}g}(gx)$ for any $g\in G$. Consequently, if $G$ is commutative, then ${\mathcal{C}}_{\mathcal{F}}(x)={\mathcal{C}}_{\mathcal{F}}(gx)$ for all $g\in G$.
\end{cor}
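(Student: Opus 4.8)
The plan is to reduce everything to Lemma~\ref{lem1.2} together with two elementary facts about how the upper density $\overline{d}_{\mathcal{F}}$ behaves under translation. First I would record that a left F\o lner net makes $\overline{d}_{\mathcal{F}}$ invariant under left translation: for every $h\in G$ and every $A\subseteq G$ one has $\overline{d}_{\mathcal{F}}(hA)=\overline{d}_{\mathcal{F}}(A)$. This follows because the bijection $t\mapsto h^{-1}t$ gives $|hA\cap F_n|=|A\cap h^{-1}F_n|$, and the defining property $|h^{-1}F_n\vartriangle F_n|/|F_n|\to 0$ forces $|A\cap h^{-1}F_n|$ and $|A\cap F_n|$ to have the same asymptotic ratio against $|F_n|$. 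Second, the passage from $\mathcal{F}$ to $\mathcal{F}g=\{F_ng\}_{n\in D}$ translates the argument on the right, exactly and with no F\o lner input needed: since $t\mapsto tg^{-1}$ is a bijection of $G$ carrying $A\cap F_ng$ onto $Ag^{-1}\cap F_n$ and $|F_ng|=|F_n|$, one gets $\overline{d}_{\mathcal{F}g}(A)=\overline{d}_{\mathcal{F}}(Ag^{-1})$ for all $A\subseteq G$.

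To see that $\mathcal{C}_{\mathcal{F}}(x)$ is $G$-invariant, I would fix $y\in\mathcal{C}_{\mathcal{F}}(x)$ and $h\in G$, and let $V\in\mathscr{U}_{hy}$. Continuity of $h$ yields $U\in\mathscr{U}_y$ with $hU\subseteq V$, whence $hN(x,U)\subseteq N(x,V)$ (if $wx\in U$ then $(hw)x=h(wx)\in hU\subseteq V$). By Lemma~\ref{lem1.2} we have $\overline{d}_{\mathcal{F}}(N(x,U))>0$; the left-invariance identity gives $\overline{d}_{\mathcal{F}}(hN(x,U))=\overline{d}_{\mathcal{F}}(N(x,U))>0$, and monotonicity of $\overline{d}_{\mathcal{F}}$ forces $\overline{d}_{\mathcal{F}}(N(x,V))>0$. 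As $V\in\mathscr{U}_{hy}$ was arbitrary, Lemma~\ref{lem1.2} gives $hy\in\mathcal{C}_{\mathcal{F}}(x)$.

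For the identity $\mathcal{C}_{\mathcal{F}}(x)=\mathcal{C}_{\mathcal{F}g}(gx)$ I would again test membership through Lemma~\ref{lem1.2}, now comparing, for each $y$ and each $U\in\mathscr{U}_y$, the quantity $\overline{d}_{\mathcal{F}}(N(x,U))$ against $\overline{d}_{\mathcal{F}g}(N(gx,U))$. The orbit relation $h(gx)=(hg)x$ shows that $N(gx,U)$ is a right translate of $N(x,U)$, and the second density identity rewrites $\overline{d}_{\mathcal{F}g}$ as $\overline{d}_{\mathcal{F}}$ of a further right translate. The point is that these two right translations, the one coming from shifting the base point from $x$ to $gx$ and the one coming from shifting the net from $\mathcal{F}$ to $\mathcal{F}g$, must be matched so as to cancel, so that $\overline{d}_{\mathcal{F}g}(N(gx,U))$ reduces to $\overline{d}_{\mathcal{F}}(N(x,U))$; membership in the two centers is then literally the same condition for every $U$.

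When $G$ is abelian, translation preserves $\overline{d}_{\mathcal{F}}$ on both sides, so the second identity degenerates to $\overline{d}_{\mathcal{F}g}(A)=\overline{d}_{\mathcal{F}}(A)$; hence $\mathcal{C}_{\mathcal{F}g}(gx)=\mathcal{C}_{\mathcal{F}}(gx)$, and combined with the preceding equality this yields $\mathcal{C}_{\mathcal{F}}(x)=\mathcal{C}_{\mathcal{F}}(gx)$. I expect the only genuine obstacle to lie in the bookkeeping of the equality step: a \emph{left} F\o lner net supplies invariance only under \emph{left} translation, so one must keep careful track of which side each translation acts on and verify that the base-point shift and the net shift cancel rather than compound. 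Everything else is routine counting against $|F_n|$ together with the characterization in Lemma~\ref{lem1.2}.
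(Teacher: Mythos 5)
Your treatment of the $G$-invariance part is correct and is essentially the paper's own argument: the paper realizes left-translation invariance of $\overline{d}_{\mathcal{F}}$ concretely through Proposition~\ref{prop:sup-subnet} (a subnet along which the density of $N(x,g^{-1}U)$ is attained, compared against $|N(x,U)\cap gF_{n_k}|$ via the F{\o}lner property), while you package the same estimate as the abstract identity $\overline{d}_{\mathcal{F}}(hA)=\overline{d}_{\mathcal{F}}(A)$; the difference is cosmetic.

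The genuine gap is in the ``moreover'' identity, and it sits exactly at the step you flagged as the ``only genuine obstacle'' and then asserted away. Both of your density facts are correct: $N(gx,U)=N(x,U)g^{-1}$ and $\overline{d}_{\mathcal{F}g}(A)=\overline{d}_{\mathcal{F}}(Ag^{-1})$. But combining them gives
\[
\overline{d}_{\mathcal{F}g}\bigl(N(gx,U)\bigr)
=\overline{d}_{\mathcal{F}}\bigl(N(gx,U)g^{-1}\bigr)
=\overline{d}_{\mathcal{F}}\bigl(N(x,U)g^{-2}\bigr),
\]
so the base-point shift and the net shift \emph{compound}; they do not cancel. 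Since the net is only left F{\o}lner, there is no control at all over right translates, and $\overline{d}_{\mathcal{F}}(N(x,U)g^{-2})$ cannot be identified with $\overline{d}_{\mathcal{F}}(N(x,U))$ for noncommutative $G$. What your two facts do prove, by genuine cancellation, is
\[
\overline{d}_{\mathcal{F}g}\bigl(N(x,U)\bigr)
=\overline{d}_{\mathcal{F}}\bigl(N(x,U)g^{-1}\bigr)
=\overline{d}_{\mathcal{F}}\bigl(N(gx,U)\bigr),
\]
hence $\mathcal{C}_{\mathcal{F}g}(x)=\mathcal{C}_{\mathcal{F}}(gx)$, equivalently $\mathcal{C}_{\mathcal{F}}(x)=\mathcal{C}_{\mathcal{F}g^{-1}}(gx)$. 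In other words, the identity as printed in the corollary (for which the paper itself offers no argument beyond ``the second part is obvious'') has $g$ where a provable statement needs $g^{-1}$: literally, $\mathcal{C}_{\mathcal{F}g}(gx)=\mathcal{C}_{\mathcal{F}}(g^{2}x)$, and there is no reason for that to equal $\mathcal{C}_{\mathcal{F}}(x)$ in general. So rather than hunting for a cancellation that does not occur, you should prove the $g^{-1}$ form. This repair costs nothing for the final assertion: when $G$ is abelian, right and left translates coincide, your first fact gives $\overline{d}_{\mathcal{F}g}=\overline{d}_{\mathcal{F}}$, and either form of the identity then yields $\mathcal{C}_{\mathcal{F}}(x)=\mathcal{C}_{\mathcal{F}}(gx)$, so your last paragraph survives intact once the middle step is corrected.
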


\begin{proof}
Let $y\in {\mathcal{C}}_{\mathcal{F}}(x)$ and $g\in G$.
We want to show that $gy\in {\mathcal{C}}_{\mathcal{F}}(x)$.
Let $U\in \mathscr{U}_{gy}$.
Then $g^{-1}U\in \mathscr{U}_{y}$.
By Lemma~\ref{lem1.2} and Proposition~\ref{prop:sup-subnet},
there exists a subnet $\{F_{n_k}\}_{k\in E}$ of $\mathcal{F}$ such that
\begin{align*}
\lim_{k\in E}\frac{|N(x,U)\cap F_{n_k}|}{|F_{n_k}|}
&=\lim_{k\in E}\frac{|N(x,U)\cap gF_{n_k}|}{|F_{n_k}|}\\
&=\lim_{k\in E}\frac{|N(x,g^{-1}U)\cap F_{n_k}|}{|F_{n_k}|}\\
&>0.
\end{align*}
Therefore ${\mathcal{C}}_{\mathcal{F}}(x)$ is $G$-invariant. The second part is obvious.
\end{proof}

By Corollary~\ref{cor2.8}, it follows that ${\mathcal{C}}_{\mathcal{F}}(x)$
can be viewed as a subsystem of $(G,X)$.

\begin{cor}\label{cor2.9}
For any $x\in X$, it holds that ${\mathcal{C}}_{\mathcal{F}}(x)\subseteq c\ell_X{Gx}$. Moreover, if $x\in {\mathcal{C}}_{\mathcal{F}}(x)$,
then $c\ell_X{Gx}={\mathcal{C}}_{\mathcal{F}}(x)$.
\end{cor}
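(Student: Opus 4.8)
The plan is to prove the two assertions separately, drawing on the characterization in Lemma~\ref{lem1.2} for the first inclusion and on the $G$-invariance from Corollary~\ref{cor2.8} for the equality.

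First I would establish ${\mathcal{C}}_{\mathcal{F}}(x)\subseteq c\ell_X{Gx}$. Fix any $y\in {\mathcal{C}}_{\mathcal{F}}(x)$ and any $U\in\mathscr{U}_y$. By Lemma~\ref{lem1.2} we have $\overline{d}_{\mathcal{F}}(N(x,U))>0$; since a subset of $G$ with positive upper density is in particular nonempty, there exists some $g\in G$ with $gx\in U$, so $U\cap Gx\neq\emptyset$. As $U$ ranges over the entire neighborhood system $\mathscr{U}_y$, this says precisely that $y\in c\ell_X{Gx}$, and the first inclusion follows.

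For the second assertion, suppose $x\in {\mathcal{C}}_{\mathcal{F}}(x)$. The inclusion ${\mathcal{C}}_{\mathcal{F}}(x)\subseteq c\ell_X{Gx}$ is already in hand from the first part, so it remains to prove the reverse containment. By Corollary~\ref{cor2.8} the set ${\mathcal{C}}_{\mathcal{F}}(x)$ is $G$-invariant, whence $Gx\subseteq {\mathcal{C}}_{\mathcal{F}}(x)$ because $x$ itself lies in it. Taking closures and using that ${\mathcal{C}}_{\mathcal{F}}(x)$ is closed (it is a center of attraction, hence closed by definition), we obtain $c\ell_X{Gx}\subseteq c\ell_X{\mathcal{C}}_{\mathcal{F}}(x)={\mathcal{C}}_{\mathcal{F}}(x)$. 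Combining the two inclusions yields $c\ell_X{Gx}={\mathcal{C}}_{\mathcal{F}}(x)$.

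I do not expect any genuine obstacle here: both parts are short deductions from results already established. The only point requiring a moment's care is the passage from positive upper density to nonemptiness of $N(x,U)$, since it is exactly this step that forces an actual point of the orbit into every neighborhood of $y$ and thereby places $y$ in the orbit closure. The remainder is the standard observation that a closed $G$-invariant set containing a point must contain the closure of that point's orbit.
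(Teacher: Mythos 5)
Your proof is correct. The first inclusion (via Lemma~\ref{lem1.2} and the observation that a set of positive upper density is nonempty) is exactly the argument the paper leaves implicit when it calls that part ``trivial,'' and your use of Corollary~\ref{cor2.8} plus closedness to get $c\ell_X Gx\subseteq {\mathcal{C}}_{\mathcal{F}}(x)$ also matches the paper. Where you genuinely differ is in how the equality is finished. You close it by simply reusing the first part: since ${\mathcal{C}}_{\mathcal{F}}(x)\subseteq c\ell_X Gx$ holds unconditionally, combining it with the reverse containment yields equality at once. The paper instead gives a self-contained argument that never appeals to the first part: it notes that for every open neighborhood $U$ of $c\ell_X Gx$ one has $N(x,U)=G$, hence $d_{\mathcal{F}}(N(x,U))=1$, so $c\ell_X Gx$ is itself an $\mathcal{F}$-center of attraction of $x$; since this set sits inside ${\mathcal{C}}_{\mathcal{F}}(x)$, the defining minimality of ${\mathcal{C}}_{\mathcal{F}}(x)$ forces the two to coincide. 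Your route is the more economical one --- it needs only Lemma~\ref{lem1.2}, already invoked in part one, and never returns to the minimality property --- while the paper's route works directly from the definition of minimal center of attraction and stands on its own even without the first assertion. The two mechanisms are ultimately cousins, since Lemma~\ref{lem1.2} is itself proved from that same minimality, but as written your deduction is the shorter of the two.
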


\begin{proof}
The first part is trivial. Now suppose $x\in {\mathcal{C}}_{\mathcal{F}}(x)$.
By Corollary~\ref{cor2.8},
it follows that $c\ell_X{Gx}\subseteq {\mathcal{C}}_{\mathcal{F}}(x)$.
It is easy to see that for every open neighborhood $U$ of $c\ell_X{Gx}$,
$d_{\mathcal{F}}(N(x,U))=1$.
From the minimality of ${\mathcal{C}}_{\mathcal{F}}(x)$,
we can conclude that $c\ell_X{Gx}={\mathcal{C}}_{\mathcal{F}}(x)$.
\end{proof}

Given any two points $x, y$ in $X$, the pair $\{x, y\}$ is
called {\it asymptotic}  if  for every $n\in \mathbb{N}$, there exists finite subset $F$  of $G$
such that $d(gx,gy)<1/n$  for all $g\in G$ with $g\not\in F$.

\begin{prop}\label{prop:MCA-approx}
For any two points $x, y\in X$,
if $\{x,y\}$ is asymptotic for $(G,X)$,
then ${\mathcal{C}}_{\mathcal{F}}(x)={\mathcal{C}}_{\mathcal{F}}(y)$.
\end{prop}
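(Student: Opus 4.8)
The plan is to show that the two minimal centers of attraction coincide by exploiting the characterization from Lemma~\ref{lem1.2}, namely that $z \in {\mathcal{C}}_{\mathcal{F}}(x)$ if and only if $\overline{d}_{\mathcal{F}}(N(x,U)) > 0$ for every $U \in \mathscr{U}_z$. By symmetry it suffices to prove the inclusion ${\mathcal{C}}_{\mathcal{F}}(x) \subseteq {\mathcal{C}}_{\mathcal{F}}(y)$; the reverse follows by interchanging the roles of $x$ and $y$, since asymptoticity is a symmetric relation. So I fix a point $z \in {\mathcal{C}}_{\mathcal{F}}(x)$ together with an arbitrary neighborhood $U \in \mathscr{U}_z$, and aim to deduce that $\overline{d}_{\mathcal{F}}(N(y,U)) > 0$.

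The key idea is that the asymptotic condition forces the orbits of $x$ and $y$ to become uniformly close off a finite set, so membership in $U$ transfers from $x$ to $y$ for all but finitely many $g \in G$. Concretely, choose $\epsilon > 0$ small enough that $B_{2\epsilon}(z) \subseteq U$, and note that $z \in {\mathcal{C}}_{\mathcal{F}}(x)$ gives $\overline{d}_{\mathcal{F}}(N(x, B_\epsilon(z))) =: \alpha > 0$. Pick $n \in \mathbb{N}$ with $1/n < \epsilon$; by asymptoticity there is a finite set $F \subseteq G$ with $d(gx, gy) < 1/n < \epsilon$ for all $g \notin F$. Then for every $g \in N(x, B_\epsilon(z)) \setminus F$ we have $gx \in B_\epsilon(z)$ and $d(gx, gy) < \epsilon$, whence $gy \in B_{2\epsilon}(z) \subseteq U$, i.e. $g \in N(y, U)$. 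This yields the set inclusion
\[
N(x, B_\epsilon(z)) \setminus F \subseteq N(y, U).
\]

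The final step is to check that removing the finite set $F$ does not destroy positive upper density. This is where I would invoke Proposition~\ref{prop:sup-subnet}: there is a subnet $\{F_{n_k}\}_{k\in E}$ of $\mathcal{F}$ with $\lim_{k\in E} |F_{n_k} \cap N(x,B_\epsilon(z))|/|F_{n_k}| = \alpha$. Since $F$ is finite and, by Lemma~\ref{lem2.6} applied to this subnet, the cardinalities $|F_{n_k}|$ can be taken to tend to infinity, the contribution $|F \cap F_{n_k}|/|F_{n_k}|$ vanishes along the subnet, so
\[
\overline{d}_{\mathcal{F}}(N(y,U)) \geq \lim_{k\in E} \frac{|(N(x,B_\epsilon(z)) \setminus F) \cap F_{n_k}|}{|F_{n_k}|} = \alpha > 0.
\]
Because $U \in \mathscr{U}_z$ was arbitrary, the characterization in Lemma~\ref{lem1.2} gives $z \in {\mathcal{C}}_{\mathcal{F}}(y)$, completing the inclusion.

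The main obstacle is the density bookkeeping in the last step: the upper density is defined as a supremum over the net rather than a genuine limit, so subtracting a finite set must be handled carefully. I expect Proposition~\ref{prop:sup-subnet} together with Lemma~\ref{lem2.6} (guaranteeing $|F_{n_k}| \to \infty$ along a suitable subnet) to resolve this cleanly, since a fixed finite $F$ has density zero along any net whose block sizes diverge. The transfer of orbit membership itself is routine once $\epsilon$ and $n$ are chosen compatibly with the triangle inequality.
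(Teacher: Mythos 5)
Your proof is correct and follows essentially the same route as the paper's own argument: both fix $z\in{\mathcal{C}}_{\mathcal{F}}(x)$ and $U\in\mathscr{U}_z$, use asymptoticity to get the inclusion $N(x,B_\epsilon(z))\setminus F\subseteq N(y,U)$ for a finite set $F$, and then combine Proposition~\ref{prop:sup-subnet} with Lemma~\ref{lem2.6} to see that deleting the finite set $F$ does not reduce the upper density, concluding via Lemma~\ref{lem1.2} and symmetry. The only differences are cosmetic (your $\epsilon$/$2\epsilon$ bookkeeping versus the paper's $\epsilon/2$).
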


\begin{proof}
Assume that $\{x,y\}$ is asymptotic.
Fix an arbitrary point $z\in {\mathcal{C}}_{\mathcal{F}}(x)$.
Let $U\in \mathscr{U}_z$.
Choose an $\epsilon>0$ with $B_\epsilon(z)\subset U$.
Since $\{x,y\}$ is asymptotic,
it follows that there exists  a finite subset $F$ of $G$ such that
 $d(gx,gy)<{\epsilon}/{2}$  for all $g\in G\setminus F$.
 Then $N(y,U)\supset N(x,B_{{\epsilon}/{2}}(z))\setminus F$.
By Lemma~\ref{lem1.2},
it follows that
$\overline{d}_{\mathcal {F}}(N(x,B_{{\epsilon}/{2}}(z))):=\alpha>0$.
From Proposition~\ref{prop:sup-subnet},
there exists a subnet $\{F_{n_k}\}_{k\in E}$ of $\mathcal{F}$ such that
\[\lim_{k\in E}\frac{|F_{n_k}\cap N(x,B_{{\epsilon}/{2}}(z))|}{|F_{n_k}|}=\overline{d}_{\mathcal {F}}(N(x,B_{{\epsilon}/{2}}(z))).\]
Moreover, by Lemma~\ref{lem2.6},
we can require that $\lim_{k\in D}|F_{n_k}|=+\infty$.
Then
\[\lim_{k\in E}\frac{|F_{n_k}\cap (N(x,B_{{\epsilon}/{2}}(z))\setminus F)|}{|F_{n_k}|}=\alpha.\]
Thus
\begin{align*}
\overline{d}_{\mathcal {F}}(N(y,U))\geq& \overline{d}_{\mathcal {F}}(N(x,B_{{\epsilon}/{2}}(z))\setminus F)>0.
\end{align*}
Since $U$ is arbitrary, it follows that $z\in {\mathcal{C}}_{\mathcal{F}}(x)$ and then
${\mathcal{C}}_{\mathcal{F}}(x)\subseteq {\mathcal{C}}_{\mathcal{F}}(y)$.
Similarly, we have ${\mathcal{C}}_{\mathcal{F}}(y)\subseteq {\mathcal{C}}_{\mathcal{F}}(x)$.
This thus completes the proof of Proposition~\ref{prop:MCA-approx}.
\end{proof}

The following Proposition is a folklore result, we provide a proof for completeness.

\begin{prop}\label{prop:invar-thick}
If $\Lambda$ is a nonempty $G$-invariant closed subset of $c\ell_X{Gx}$,
then for any open neighborhood $U$ of $\Lambda$,
$N(x,U)$ is thick in $G$.
\end{prop}

\begin{proof}
Suppose that $\Lambda$ is a nonempty $G$-invariant closed subset of $c\ell_X{Gx}$.
Let $U$ be any open neighborhood of $\Lambda$.
Let $F$ be any finite subset of $G$.
For every $y\in\Lambda$  and every $g\in F$,
there exists $V_g\in \mathscr{U}_y$ such that
 $gV_g\subset U$ since $\Lambda$ is $G$-invariant.
Put $V_y:=\bigcap_{g\in F}V_g$.
Then $V_y\in \mathscr{U}_y$ and $FV_y\subset U$.
Take $V=\bigcup\{V_y\,|\,y\in\Lambda\}$.
Then $V$ is an open neighborhood of
$\Lambda$ and $FV \subset U$.
Since $\Lambda\subset c\ell_X{Gx}$,
it follows that
there exists $t\in G$ such that $tx\in V$. Hence
$Ftx\subset U$, which implies that
$N(x,U)\supset Ft$.
This ends the proof of Proposition~\ref{prop:invar-thick}.
\end{proof}

\section{Li-Yorke chaotic pairs and sensitivity (I)}\label{sec3}
Let $(G,X)$ be a $G$-system in the sequel of this section.
Recall that for any two points $x, y$ in $X$, the pair $\{x, y\}$ is
called
\begin{itemize}
\item  {\it proximal} if there exists a sequence $\{t_n\}_1^\infty$
in $G$ such that $\lim_{n\to+\infty}d(t_n x, t_n y)=0$ (cf.~\cite[Definition~8.1]{F-81}); and
\item  {\it Li-Yorke chaotic} if $\{x,y\}$ is proximal but not asymptotically for $(G,X)$.
\end{itemize}
If there can be found an uncountable
subset $C$ of $X$ such that for any $x, y\in C, x\not=y$, $\{x,y\}$
is a Li-Yorke chaotic pair for $(G,X)$, then we say $(G,X)$ is
{\it Li-Yorke chaotic}~\cite{DT}; see, e.g., Li-Yorke~\cite{LY-75} for $\mathbb{Z}_+$-systems.

First of all, the proximal pair can be characterized via ultrafilter as follows.

\begin{lem}[{\cite[Lemma 19.22]{Hindman}}]\label{prox-point-equ}
Given any $x,y\in X$, the pair $\{x,y\}$ is proximal if and only if there exists $p\in \beta G$ such that
$p\textrm{-}\lim_{g\in G}gx=p\textrm{-}\lim_{g\in G}gy$ (i.e. $px=py$).
\end{lem}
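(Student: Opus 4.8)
The plan is to prove the two implications separately, using the correspondence between $p$-limits and the continuous extension of the orbit maps to $\beta G$. Throughout I write $e_t$ for the principal ultrafilter at $t\in G$, under which $e_t x = tx$.

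For the easy implication, suppose some $p\in\beta G$ satisfies $px=py=:z$. Here I would exploit only that $p$ is an ultrafilter, hence closed under finite intersections with $\emptyset\notin p$. For each $n\in\mathbb{N}$, the defining property of the $p$-limit gives $\{g\in G\,|\,gx\in B_{1/n}(z)\}\in p$ and $\{g\in G\,|\,gy\in B_{1/n}(z)\}\in p$; their intersection then lies in $p$ and is therefore nonempty, so I may pick $t_n$ in it. Consequently $d(t_n x,t_n y)\leq d(t_n x,z)+d(z,t_n y)<2/n$, and letting $n\to+\infty$ shows that $\{x,y\}$ is proximal. This direction is essentially book-keeping.

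For the converse, which I expect to be the main obstacle, suppose $\{x,y\}$ is proximal, witnessed by a sequence $\{t_n\}$ with $d(t_n x,t_n y)\to 0$. The key idea is that the orbit maps $g\mapsto gx$ and $g\mapsto gy$ from the discrete group $G$ into the compact Hausdorff space $X$ extend continuously to $\beta G$, the extensions being exactly $p\mapsto px$ and $p\mapsto py$; this is the content of the cited existence-and-uniqueness of $p$-limits, \cite[Theorem 3.48]{Hindman}. Assembling the two extensions yields a continuous map $\phi\colon\beta G\to X\times X$ given by $\phi(p)=(px,py)$, which on principal ultrafilters returns $\phi(e_t)=(tx,ty)$.

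I would then invoke compactness of $\beta G$: the sequence $\{e_{t_n}\}$ admits a convergent subnet $e_{t_{n_\alpha}}\to p$ for some $p\in\beta G$. By continuity of $\phi$, the images $(t_{n_\alpha}x,t_{n_\alpha}y)=\phi(e_{t_{n_\alpha}})$ converge to $\phi(p)=(px,py)$ in $X\times X$; since $d(t_{n_\alpha}x,t_{n_\alpha}y)\to 0$ as a subnet of a sequence tending to $0$, passing to the limit forces $d(px,py)=0$, i.e. $px=py$. The only delicate point is that $\beta G$ is not metrizable for infinite $G$, so one must argue with subnets rather than subsequences. Alternatively, one can first extract a genuine convergent subsequence $(t_{n_k}x,t_{n_k}y)\to(w,w)$ inside the compact \emph{metric} space $X\times X$, and then build an ultrafilter $p$ containing every tail set $\{t_{n_k}\,|\,d(t_{n_k}x,w)<\epsilon \text{ and } d(t_{n_k}y,w)<\epsilon\}$, checking directly from the definition of the $p$-limit (upward closure of $p$) that $px=w=py$.
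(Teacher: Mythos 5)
Your proof is correct. Note first that the paper itself offers no proof of this lemma: it is quoted verbatim from Hindman--Strauss \cite[Lemma 19.22]{Hindman}, so your argument is filling in a citation rather than competing with an argument in the text. Both of your directions are sound. The forward direction (from $px=py$ to proximality) is exactly the standard bookkeeping: the sets $\{g\,|\,gx\in B_{1/n}(z)\}$ and $\{g\,|\,gy\in B_{1/n}(z)\}$ lie in $p$, their intersection is nonempty, and choosing $t_n$ there gives $d(t_nx,t_ny)<2/n$. For the converse, your two variants are both valid, and you correctly flag the one genuine trap, namely that $\beta G$ is not metrizable, so the compactness argument must run through subnets: a subnet of a sequence tending to $0$ still tends to $0$, and continuity of $p\mapsto(px,py)$ (which is the universal property of $\beta G$ applied coordinatewise, i.e.\ the continuity statement accompanying the existence--uniqueness of $p$-limits in \cite{Hindman}) then forces $d(px,py)=0$. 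Your alternative construction --- first extracting a convergent subsequence $(t_{n_k}x,t_{n_k}y)\to(w,w)$ in the compact \emph{metric} space $X\times X$, then extending the family of tail sets (which has the finite intersection property) to an ultrafilter $p$ and verifying $px=w=py$ directly from upward closure --- is in fact closer to the proof in Hindman--Strauss, which builds the ultrafilter from the sets $\{g\,|\,d(gx,gy)<\epsilon\}$; it has the advantage of never invoking continuity on $\beta G$, at the cost of an appeal to the ultrafilter lemma. Either route is complete, so the lemma is correctly established.
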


Another customary description of chaos is {\it sensitivity} to initial conditions (cf., e.g., \cite{Devaney-89, LY}
for $\mathbb{Z}_+$-systems and \cite{Dai-1, Dai-2} for $\mathbb{R}_+$-systems):
\begin{itemize}
\item There exists an $\epsilon>0$ such that for all $x\in X$ and any $U\in\mathscr{U}_x$,
there are some $y\in U$ and some $g\in G$ with
$d(gx, gy)\geq\epsilon$; such an $\epsilon$ is called a {\it sensitive constant} of $(G,X)$.
\end{itemize}
 See, e.g., \cite{KM,DT}.

\subsection{Non-S-generic case}\label{sec3.1}
Let $x\in X$ and $\mathcal{F}=\{F_n\}_{n\in D}$ be a F{\o}lner net in $G$.
We first show that if ${\mathcal{C}}_{\mathcal{F}}(x)$ has no the S-generic dynamics,
then $(G,X)$ has the
chaotic dynamics near ${\mathcal{C}}_{\mathcal{F}}(x)$.

\begin{thm}\label{thm3.2}
If ${\mathcal{C}}_{\mathcal{F}}(x)$ is not S-generic,
then one can find some point $y\in\Lambda$, for any closed
$G$-invariant subset $\Lambda\subseteq {\mathcal{C}}_{\mathcal{F}}(x)$,
 such that $\{x, y\}$ is Li-Yorke chaotic for $(G,X)$.
\end{thm}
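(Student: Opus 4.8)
The plan is to produce a single point $y\in\Lambda$ of the form $y=px$ for a minimal idempotent $p\in\beta G$, and then to show that $\{x,y\}$ is proximal but fails to be asymptotic — the latter being exactly where the non-S-generic hypothesis enters.

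First I would record the ambient inclusion. By Corollary~\ref{cor2.9} we have $\Lambda\subseteq {\mathcal{C}}_{\mathcal{F}}(x)\subseteq c\ell_X{Gx}$, so $\Lambda$ is a nonempty closed $G$-invariant subset of the orbit closure. Consider the evaluation map $\phi\colon\beta G\to X$, $\phi(p)=px=p\textrm{-}\lim_{g\in G}gx$, which is well defined and continuous with image $\phi(\beta G)=c\ell_X{Gx}$. Set
\[T=\phi^{-1}(\Lambda)=\{p\in\beta G\,|\,px\in\Lambda\}.\]
I claim $T$ is a nonempty closed left ideal of $\beta G$: it is closed as the preimage of the closed set $\Lambda$, and nonempty because $\Lambda\subseteq c\ell_X{Gx}=\phi(\beta G)$. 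For the left-ideal property, take $q\in T$ and $r\in\beta G$; then $(rq)x=r(qx)$, and since $qx\in\Lambda$ while $\Lambda$ is closed and $G$-invariant, the limit $r(qx)=r\textrm{-}\lim_{g\in G}g(qx)$ again lies in $\Lambda$, so $rq\in T$. As recalled in $\S\ref{sec2}$, every left ideal of the compact Hausdorff right-topological semigroup $\beta G$ contains a minimal left ideal, and every minimal left ideal carries an idempotent; thus $T$ contains a minimal idempotent $p$. Put $y:=px\in\Lambda$.

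Next I would verify the two dynamical properties of the pair $\{x,y\}$. Since $y=px$ with $p$ a minimal idempotent, Lemma~\ref{mini-point-equ} (the equivalence of being minimal with condition~(3)) shows that $y$ is a minimal point. For proximality I would apply Lemma~\ref{prox-point-equ} with the ultrafilter $q=p$: indeed $px=y$, and using $p\cdot p=p$ we get $py=p(px)=p^2x=px=y$, so $px=py$ and hence $\{x,y\}$ is proximal.

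Finally, the crux is to rule out asymptoticity. Suppose toward a contradiction that $\{x,y\}$ is asymptotic. Then Proposition~\ref{prop:MCA-approx} gives ${\mathcal{C}}_{\mathcal{F}}(x)={\mathcal{C}}_{\mathcal{F}}(y)$. But $y\in\Lambda\subseteq {\mathcal{C}}_{\mathcal{F}}(x)$, so this exhibits a point of ${\mathcal{C}}_{\mathcal{F}}(x)$ whose minimal center of attraction coincides with ${\mathcal{C}}_{\mathcal{F}}(x)$ itself; that is, ${\mathcal{C}}_{\mathcal{F}}(x)$ is S-generic, contradicting the hypothesis. Therefore $\{x,y\}$ is proximal but not asymptotic, hence Li-Yorke chaotic for $(G,X)$ (in particular $x\neq y$, since any point is trivially asymptotic to itself). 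The main obstacle I anticipate is the semigroup bookkeeping in the left-ideal step: one must check carefully that $T$ is genuinely a \emph{left} (not right) ideal and that the idempotent extracted from a minimal left ideal contained in $T$ is a minimal idempotent in the sense introduced in the preliminaries, so that the minimality criterion of Lemma~\ref{mini-point-equ} applies; everything else reduces to the already-cited results.
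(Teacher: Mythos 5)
Your proof is correct, and its skeleton coincides with the paper's: both arguments construct $y$ as the image of $x$ under an idempotent ultrafilter landing in the invariant set, deduce proximality from idempotency via Lemma~\ref{prox-point-equ}, and rule out asymptoticity by combining Proposition~\ref{prop:MCA-approx} with the non-S-generic hypothesis, so the two proofs agree verbatim from the moment the idempotent is in hand. The one genuine difference is how that idempotent is produced. The paper first passes, via Zorn's lemma, to a minimal subset $\Lambda'\subseteq\Lambda$, uses Lemma~\ref{lem1.2} to see that the orbit of $x$ accumulates on $\Lambda'$, checks that $F=\{p\in\beta G\,|\,px\in\Lambda'\}$ is a compact right-topological subsemigroup of $\beta G$, and invokes the Numakura--Wallace--Ellis theorem to extract an idempotent $u\in F$. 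You instead observe that $T=\{p\in\beta G\,|\,px\in\Lambda\}$ is a nonempty closed \emph{left ideal} of $\beta G$ (nonemptiness coming directly from Corollary~\ref{cor2.9} and surjectivity of $p\mapsto px$ onto $c\ell_X Gx$), so the structure facts quoted in \S\ref{sec2} hand you a \emph{minimal} idempotent $p\in T$, with no reduction to a minimal subset required. Your sidedness bookkeeping is sound: $(rq)x=r(qx)$ together with $G$-invariance and closedness of $\Lambda$ gives $\beta G\,T\subseteq T$, and an idempotent of a minimal left ideal of $\beta G$ lying inside $T$ is a minimal idempotent in the paper's sense, so Lemma~\ref{mini-point-equ} applies. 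This variant even buys a little extra: it shows $y=px$ is almost periodic, which is precisely the refinement the paper obtains later in Theorem~\ref{thm:not-generic-LY} by citing \cite[Proposition~8.6]{F-81}. (Your closing remark that $x\neq y$ follows from non-asymptoticity is fine; the paper gets the same nondegeneracy from $x\notin\mathcal{C}_{\mathcal{F}}(x)$, which is its first observation via Corollary~\ref{cor2.9}.)
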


\begin{proof}
Given any $x$ in $X$, let ${\mathcal{C}}_{\mathcal{F}}(x)$ be not S-generic.
Let $\Lambda$ be a
nonempty $G$-invariant closed subset of ${\mathcal{C}}_{\mathcal{F}}(x)$.
Then by Corollary~\ref{cor2.9} it follows that $x\notin {\mathcal{C}}_{\mathcal{F}}(x)$.
From Zorn's lemma, there exists a minimal set $\Lambda'$ in $\Lambda$.
From Lemma~\ref{lem1.2},
it follows that $x$ is proximal to $\Lambda'$; that is,
there exists a sequence $\{t_n\}_1^\infty$ in $G$ such that
\[\lim_{n\to+\infty}d(t_n x,\Lambda')=0.\]
This implies that $\Lambda'\cap\beta Gx\neq\emptyset$.
Take
\[F=\{p\in\beta G\,|\, p\textrm{-}{\lim}_{g\in G}gx\in\Lambda'\}.\]
Since $\Lambda'$ is a compact
$G$-invariant subset, $F$ is a compact Hausdorff semigroup.
From the Numakura-Wallace-Ellis theorem (see, e.g.,~\cite[Theorem 1.18]{Ellis-semigroup},~\cite[Lemma~8.4]{F-81} and \cite[Theorem 2.5]{Hindman}),
it follows that there exists an idempotent $u\in F$.
Let $y=u\textrm{-}{\lim}_{g\in G}gx$; then
\[u\textrm{-}{\lim}_{g\in G}gx=u\textrm{-}{\lim}_{g\in G}gy.\]
Therefore,
by Lemma~\ref{prox-point-equ}, it follows that the pair $\{x, y\}$ is proximal for $(G,X)$.

On the other hand,
we  claim that $\{x, y\}$ is not asymptotic for $(G,X)$.
Otherwise by Proposition~\ref{prop:MCA-approx}, there holds
${\mathcal{C}}_{\mathcal{F}}(x)={\mathcal{C}}_{\mathcal{F}}(y)$
and then ${\mathcal{C}}_{\mathcal{F}}(x)$ is S-generic.
This is a contradiction.

The proof of Theorem~\ref{thm3.2} is therefore completed.
\end{proof}

We remark that the point $x$ does not need to be recurrent in Theorem~\ref{thm3.2}.

\begin{cor}\label{cor:non-gen-cen-set}
If ${\mathcal{C}}_{\mathcal{F}}(x)$ is not S-generic,
 then there exists a point $y\in {\mathcal{C}}_{\mathcal{F}}(x)$ such that
the pair $\{x, y\}$ is Li-Yorke chaotic for $(G,X)$ and the set
$N(x, B_\epsilon(y))$
is a central set in $G$, which has positive upper density.
\end{cor}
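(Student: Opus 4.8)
The plan is to sharpen the construction used in the proof of Theorem~\ref{thm3.2} by arranging that the idempotent produced there is \emph{minimal}, since membership of $N(x,B_\epsilon(y))$ in a minimal idempotent is precisely what makes it central. Take $\Lambda={\mathcal{C}}_{\mathcal{F}}(x)$ itself, which is a nonempty closed $G$-invariant subset by Corollary~\ref{cor2.8}. Because ${\mathcal{C}}_{\mathcal{F}}(x)$ is not S-generic, Corollary~\ref{cor2.9} gives $x\notin{\mathcal{C}}_{\mathcal{F}}(x)$, and Zorn's lemma yields a minimal subset $\Lambda'\subseteq{\mathcal{C}}_{\mathcal{F}}(x)$. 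Exactly as in Theorem~\ref{thm3.2}, Lemma~\ref{lem1.2} shows that $x$ is proximal to $\Lambda'$, so that $\Lambda'\cap\beta Gx\neq\emptyset$ and hence
\[F=\{p\in\beta G\,|\,p\textrm{-}{\lim}_{g\in G}gx\in\Lambda'\}\]
is nonempty.

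The decisive step is to notice that $F$ is not merely a closed subsemigroup but a closed \emph{left ideal} of $(\beta G,\cdot)$. Indeed, for $p\in F$ and any $r\in\beta G$, associativity of the $\beta G$-action gives $(rp)x=r(px)$; since $px\in\Lambda'$ and $\Lambda'$ is closed and $G$-invariant, the net $\{g(px)\}_{g\in G}$ lies in $\Lambda'$, so its $r$-limit $r(px)$ again lies in $\Lambda'$, whence $rp\in F$. Now every left ideal of the compact Hausdorff right-topological semigroup $\beta G$ contains a minimal left ideal $L$, and $L$ carries an idempotent $u$ (see \cite{F-81,Hindman}); by definition $u$ is a minimal idempotent, and $u\in L\subseteq F$.

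Set $y=u\textrm{-}{\lim}_{g\in G}gx=ux\in\Lambda'\subseteq{\mathcal{C}}_{\mathcal{F}}(x)$. Idempotency of $u$ gives $uy=u(ux)=ux=y$, so $ux=uy$ and Lemma~\ref{prox-point-equ} shows $\{x,y\}$ is proximal. For every $U\in\mathscr{U}_y$ the defining property of the $u$-limit yields $N(x,U)=\{g\in G\,|\,gx\in U\}\in u$; as $u$ is a minimal idempotent, each such $N(x,U)$, and in particular $N(x,B_\epsilon(y))$ for every $\epsilon>0$, is a central set in $G$. Its positive upper density is immediate from $y\in{\mathcal{C}}_{\mathcal{F}}(x)$ through Lemma~\ref{lem1.2}. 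Finally $\{x,y\}$ cannot be asymptotic: otherwise Proposition~\ref{prop:MCA-approx} would force ${\mathcal{C}}_{\mathcal{F}}(x)={\mathcal{C}}_{\mathcal{F}}(y)$ with $y\in{\mathcal{C}}_{\mathcal{F}}(x)$, making ${\mathcal{C}}_{\mathcal{F}}(x)$ S-generic and contradicting the hypothesis; hence $\{x,y\}$ is Li-Yorke chaotic.

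The only ingredient genuinely beyond Theorem~\ref{thm3.2} is the observation that $F$ is a left ideal rather than a mere subsemigroup, which is what upgrades the generic idempotent of that proof to a minimal one and thereby delivers centrality; I expect this to be the main obstacle, while the remaining assertions are routine applications of results already established above.
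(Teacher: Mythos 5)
Your proof is correct, but it takes a genuinely different route at the decisive step. The paper's own proof is much shorter: it applies Theorem~\ref{thm3.2} with $\Lambda$ chosen to be a \emph{minimal} subset of ${\mathcal{C}}_{\mathcal{F}}(x)$, so the resulting $y\in\Lambda$ is automatically almost periodic (uniformly recurrent) and proximal to $x$; it then simply cites the \emph{dynamical} characterization of central sets from Hindman--Strauss (the citation ``[Definition 19.20]'' evidently refers to the book \cite{Hindman}, Chapter 19, rather than to \cite{Hindman-06}): a return-time set $N(x,B_\epsilon(y))$ with $\{x,y\}$ proximal and $y$ uniformly recurrent is central. You instead rework the construction inside Theorem~\ref{thm3.2}: your observation that $F=\{p\in\beta G\,|\,px\in\Lambda'\}$ is not merely a compact subsemigroup (which is all Theorem~\ref{thm3.2} uses, via Ellis's lemma) but a closed \emph{left ideal} of $\beta G$ is correct --- it follows from $T_{rp}=T_r\circ T_p$ together with the closedness and invariance of $\Lambda'$ --- and this upgrade delivers a \emph{minimal} idempotent $u\in F$, whence $y=ux$ satisfies $N(x,B_\epsilon(y))\in u$ and centrality follows from the \emph{algebraic} definition of central sets. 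The proximality ($ux=uy$ plus Lemma~\ref{prox-point-equ}), the non-asymptotic argument via Proposition~\ref{prop:MCA-approx}, and the positive upper density via Lemma~\ref{lem1.2} all match the paper's reasoning.

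What each approach buys: the paper's proof reuses Theorem~\ref{thm3.2} as a black box and leans on the (nontrivial) dynamical characterization theorem; yours is self-contained modulo only the structure theory of $\beta G$ already quoted in $\S\ref{sec2}$ (every left ideal of a compact right-topological semigroup contains a minimal left ideal with an idempotent), and it exhibits the minimal idempotent witness explicitly. The one caveat is definitional: your closing claim that membership in a minimal idempotent ``is precisely what makes it central'' presumes the algebraic (Bergelson--Hindman) definition of central; if one takes Furstenberg's dynamical definition --- which the paper's citation suggests --- then your proof needs the standard equivalence theorem of the two notions to conclude, just as the paper's proof needs the converse direction if one starts from the algebraic definition. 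Since that equivalence is a classical result available in the paper's own reference \cite{Hindman}, this is a matter of bookkeeping, not a gap.
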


\begin{proof}
Let $\Lambda\subset {\mathcal{C}}_{\mathcal{F}}(x)$ be a minimal set.
Then there exists by Theorem~\ref{thm3.2} a point
$y\in\Lambda$ such that the pair $\{x, y\}$ is
Li-Yorke chaotic for $(G,X)$.
By~\cite[Definition 19.20]{Hindman-06},
it follows that for each $\epsilon > 0$ the set $N(x,B_\epsilon(y))$ is a central set of $G$.
In addition, from Lemma~\ref{lem1.2},
we can conclude that $N(x, B_\epsilon(y))$ has positive upper density.
This proves
the Corollary~\ref{cor:non-gen-cen-set}.
\end{proof}

Motivated by~\cite{Banks-sensitive,Dai-1,G93}
we can obtain the following theorem that captures sensitivity near the minimal
center of attraction of $x$ for the $G$-system $(G,X)$.

\begin{thm}\label{thm3.4}
Let ${\mathcal{C}}_{\mathcal{F}}(x)$ be not S-generic.
If the almost periodic points of $(G,X)$ are dense in ${\mathcal{C}}_{\mathcal{F}}(x)$,
then $(G,X)$ is sensitive near ${\mathcal{C}}_{\mathcal{F}}(x)$ in the following sense:
one can find an $\epsilon>0$
such that for any points $a\in X$, $\hat{x}\in {\mathcal{C}}_{\mathcal{F}}(x)$ and any $U\in \mathscr{U}_{\hat{x}}$,
there exist $y\in U$ and some $g\in G$ with
$d(ga, gy)\geq \epsilon$.
\end{thm}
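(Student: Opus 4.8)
The plan is to first reduce the assertion, which quantifies over an arbitrary reference point $a\in X$, to the purely internal task of producing a uniformly separated pair of points inside each neighborhood $U$ of a center point $\hat x$. Indeed, suppose I can fix one constant $\epsilon>0$ (depending only on $x$ and $\mathcal C_{\mathcal F}(x)$) such that for every $\hat x\in\mathcal C_{\mathcal F}(x)$ and every $U\in\mathscr U_{\hat x}$ there are $y_1,y_2\in U$ and $g\in G$ with $d(gy_1,gy_2)\geq 2\epsilon$. Then for an arbitrary $a\in X$ the triangle inequality $2\epsilon\le d(gy_1,gy_2)\le d(gy_1,ga)+d(ga,gy_2)$ forces $d(ga,gy_i)\geq\epsilon$ for at least one $i\in\{1,2\}$, and taking $y=y_i\in U$ settles the theorem. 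So the reference point $a$ is handled for free, and all the work lies in locating the separated pair.

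Second, I would pin down the constant. Because $\mathcal C_{\mathcal F}(x)$ is not S-generic, the definition of S-genericity (together with Corollary~\ref{cor2.9}) forces $x\notin\mathcal C_{\mathcal F}(x)$, since otherwise $x$ itself would witness that $\mathcal C_{\mathcal F}(x)$ is S-generic. As $\mathcal C_{\mathcal F}(x)$ is closed and nonempty, $d(x,\mathcal C_{\mathcal F}(x))=:3\epsilon>0$, and this $\epsilon$ depends on nothing but $x$, which is exactly the uniformity required.

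Third, I construct the pair. Fix $\hat x\in\mathcal C_{\mathcal F}(x)$ and $U\in\mathscr U_{\hat x}$. By Corollary~\ref{cor2.9}, $\hat x\in c\ell_X Gx$, so the orbit of $x$ meets $U$: choose $t\in G$ with $tx\in U$, and set $y_1=tx$, $y_2=\hat x$, both lying in $U$. Suppose toward a contradiction that $d(gy_1,gy_2)<2\epsilon$ for every $g\in G$. Writing $g=ht^{-1}$ for an arbitrary $h\in G$ (legitimate since $G$ is a group, so $Gt=G$) gives $d(hx,ht^{-1}\hat x)<2\epsilon$; as $ht^{-1}\hat x\in\mathcal C_{\mathcal F}(x)$ by the $G$-invariance from Corollary~\ref{cor2.8}, we obtain $d(hx,\mathcal C_{\mathcal F}(x))<2\epsilon$ for all $h\in G$. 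Taking $h=e$ yields $d(x,\mathcal C_{\mathcal F}(x))<2\epsilon$, contradicting $d(x,\mathcal C_{\mathcal F}(x))=3\epsilon$. Hence some $g$ realizes $d(gy_1,gy_2)\ge 2\epsilon$, which completes the reduction and the proof.

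The only real subtlety I anticipate is guaranteeing that $\epsilon$ is genuinely uniform in $\hat x$ and $U$, and the argument achieves this precisely because the separation is extracted from the single global quantity $d(x,\mathcal C_{\mathcal F}(x))$ together with the translation identity $Gt=G$, rather than from any local feature near $\hat x$. I would also flag that this route does not seem to invoke the hypothesis that almost periodic points are dense in $\mathcal C_{\mathcal F}(x)$; that assumption is presumably kept to match the companion statements, but the separated-pair mechanism above needs only $x\notin\mathcal C_{\mathcal F}(x)$ and the invariance of the center.
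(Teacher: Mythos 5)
Your proof is correct, and it takes a genuinely different route from the paper's. The two arguments share only the final step: reducing the arbitrary reference point $a\in X$, via the triangle inequality, to the production of a $2\epsilon$-separated pair inside $U$. The paper builds that pair dynamically and \emph{forward} in time: non-S-genericity makes $\mathcal{C}_{\mathcal{F}}(x)$ non-minimal, the assumed density of almost periodic points then yields two distinct minimal sets a definite distance apart, and for given $\hat x$ and $U$ the pair consists of an almost periodic point $p\in U\cap B_{\epsilon/2}(\hat x)$ together with an orbit point $y=tx\in U$; the separating time is found by intersecting the syndetic set of return times of $p$ to $B_{\epsilon/2}(p)$ with the thick set of times at which the orbit visits an $\epsilon$-neighborhood of a far-away minimal set (Proposition~\ref{prop:invar-thick}), so that $gy$ lands near that minimal set while $gp$ has returned near $p$. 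Your pair is instead $(tx,\hat x)$ with the single separating time $g=t^{-1}$: rewinding sends $tx$ back to $x$, while $t^{-1}\hat x$ stays inside $\mathcal{C}_{\mathcal{F}}(x)$ by the invariance of Corollary~\ref{cor2.8}, and non-S-genericity enters only through the weaker consequence $x\notin\mathcal{C}_{\mathcal{F}}(x)$, which supplies the uniform constant $d(x,\mathcal{C}_{\mathcal{F}}(x))>0$. You are right that on this route the density of almost periodic points is never used, so in the group setting your argument in fact proves a stronger statement under weaker hypotheses. What the paper's longer proof buys is precisely what your trick gives up: it does not hinge on inverting the action to drag an orbit point back to the exterior point $x$, so its mechanism (syndetic return times meeting thick visiting times) is the one that survives in non-invertible settings such as the $\mathbb{Z}_+$- and $\mathbb{R}_+$-systems this theorem generalizes; moreover it keeps both separated points near $\mathcal{C}_{\mathcal{F}}(x)$, so the sensitivity it exhibits genuinely lives near the center of attraction rather than at $x$ itself, and it yields separation along a whole set of times $F'trt^{-1}$ rather than at a single $g$.
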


\begin{proof}
Since ${\mathcal{C}}_{\mathcal{F}}(x)$ is not S-generic,
it follows that ${\mathcal{C}}_{\mathcal{F}}(x)$ is not minimal itself and
so it contains two distinct minimal points $z_1,z_2$ with $c\ell_X{Gz_1}\neq c\ell_X{Gz_2}$.
Take
$d(c\ell_X{Gz_1},c\ell_X{Gz_2})=3\delta$,
where $d(A, B) = \inf\{d(a, b)\,|\,a\in A, b\in B \}$ for any subsets $A$, $B$ of $X$.
Thus $\delta > 0$ such that
for all $\hat{x}\in {\mathcal{C}}_{\mathcal{F}}(x)$ there exists a corresponding orbit $Gz$ in ${\mathcal{C}}_{\mathcal{F}}(x)$,
not necessarily recurrent, such that
$d(x,c\ell_X{Gz})\geq\delta$
where $d(\hat{x}, A) = \inf\{d(\hat{x}, a)\,|\, a\in A \}$ for any subset $A$ of $X$.
We will show that $(G,X)$ is sensitive with
sensitive constant $\epsilon={\delta}/{4}$.

For this, we let $\hat{x}$ be an arbitrary point in ${\mathcal{C}}_{\mathcal{F}}(x)$
and $U$ an open neighborhood of $\hat{x}$ in $X$.
Since the collection of all almost periodic points of $(G,X)$ are dense in ${\mathcal{C}}_{\mathcal{F}}(x)$,
 there exists an almost periodic point $p\in X$ with
\[p\in  U\cap B_{{\epsilon}/{2}}(\hat{x})\cap {\mathcal{C}}_{\mathcal{F}}(x).\]
As we noted above,
there must exists
another point $z\in {\mathcal{C}}_{\mathcal{F}}(x)$ such that the orbit $c\ell_X{Gz}$ is of distance
at least $4\epsilon$ from the given point $\hat{x}$.

Since $p$ is an almost periodic point,
it follows that there exists a finite subset $F$ of $G$ such that $G=F^{-1}N(p, B_{{\epsilon}/{2}}(p))$.
As ${\mathcal{C}}_{\mathcal{F}}(x)$ is the minimal $\mathcal{F}$-center of attraction of $x$,
by Lemma~\ref{lem1.2},
there exists $t\in G$ such that $tx\in U$.
From Proposition~\ref{prop:invar-thick},
it follows that there exists $r\in G$ such that
$Ftr\subset N(x,B_\epsilon(c\ell_X{Gz}))$,
that is
$Ftrt^{-1}(tx)\subset B_\epsilon(c\ell_X{Gz})$.
It is clear that $F'trt^{-1}p\subset B_{{\epsilon}/{2}}(p)$ for some nonempty subset $F'$ of $F$.
Take $y=tx$.
Then $y\in U$ and
for every $g\in F'trt^{-1}$,  there holds
$d(gp,gy)\geq 2\epsilon$.

Now for any $a\in X$, using the triangle inequality,
we can conclude that either
$d(gp,ga)\geq \epsilon$
or
$d(gy,ga)\geq \epsilon$
for every $g\in F'trt^{-1}$.

Since $\hat{x}$, $U$ both are arbitrary and $y\in U$,
the proof of Theorem~\ref{thm3.4} is completed.
\end{proof}

It should be noticed here that the subsystem $(G,{\mathcal{C}}_{\mathcal{F}}(x))$ need not be point transitive comparing with \cite[Proposition~2.5]{DT}.

\subsection{S-generic case}\label{sec3.2}
In this subsection, we will show that the minimal center
of attraction exhibits more complicate behavior if it is S-generic. Let $x\in X$ and $\mathcal{F}=\{F_n\}_{n\in D}$ a F{\o}lner net in $G$.

\begin{thm}\label{thm:not-generic-LY}
Let ${\mathcal{C}}_{\mathcal{F}}(x)$ be an S-generic non-minimal subset of $(G,X)$ and
$S_x = \{y\in {\mathcal{C}}_{\mathcal{F}}(x)\,|\, {\mathcal{C}}_{\mathcal{F}}(x) = {\mathcal{C}}_{\mathcal{F}}(y)\}$.
Then for any $y\in S_x$ and any minimal subset $\Delta\subset {\mathcal{C}}_{\mathcal{F}}(x)$, there exists $z\in \Delta$
with the properties that $\{y, z\}$ is a Li-Yorke chaotic pair for $(G,X)$ and such that
there exist two sequences $\{t_n\}_1^\infty$ and $\{s_n\}_1^\infty$ in $G$ so that
\[\lim_{n\to+\infty}d(t_n y,z)=0\quad \text{and}\quad \lim_{n\to+\infty}d(t_n y,z)\ge\frac{1}{2}\text{diam}({\mathcal{C}}_{\mathcal{F}}(x))\]
where $\text{diam}({\mathcal{C}}_{\mathcal{F}}(x))$ stands for the diameter of the set ${\mathcal{C}}_{\mathcal{F}}(x)$.
Moreover, if $G$ is commutative, then the set $S_x$ is dense in ${\mathcal{C}}_{\mathcal{F}}(x)$.
\end{thm}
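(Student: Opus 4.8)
The plan is to fix $y\in S_x$ and a minimal subset $\Delta\subset{\mathcal{C}}_{\mathcal{F}}(x)$ and to produce $z$ as the image of $y$ under an idempotent of $\beta G$ lying in $\Delta$. First I would observe that membership in $S_x$ makes $y$ a transitive point of the subsystem carried by ${\mathcal{C}}_{\mathcal{F}}(x)$: since $y\in{\mathcal{C}}_{\mathcal{F}}(x)={\mathcal{C}}_{\mathcal{F}}(y)$ we have $y\in{\mathcal{C}}_{\mathcal{F}}(y)$, so Corollary~\ref{cor2.9} gives $c\ell_X{Gy}={\mathcal{C}}_{\mathcal{F}}(y)={\mathcal{C}}_{\mathcal{F}}(x)$. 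In particular $\Delta\subseteq c\ell_X{Gy}$, which is exactly what lets the orbit of $y$ reach into $\Delta$.

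The algebraic core mirrors the proof of Theorem~\ref{thm3.2}. I would set $F=\{p\in\beta G : py\in\Delta\}$. This set is nonempty because $\Delta\subseteq c\ell_X{Gy}$ forces some $p$ with $py\in\Delta$; it is closed because $p\mapsto py$ is continuous and $\Delta$ is closed; and it is a subsemigroup because $\Delta$ is $G$-invariant and closed, whence $\beta G\Delta\subseteq\Delta$ and $(pq)y=p(qy)\in\Delta$ for $p,q\in F$. The Numakura--Wallace--Ellis theorem then yields an idempotent $u\in F$. Putting $z:=uy\in\Delta$, associativity in $\beta G$ gives $uz=u(uy)=uy=z$, so that $uy=uz=z$, and Lemma~\ref{prox-point-equ} shows $\{y,z\}$ is proximal for $(G,X)$.

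It remains to exhibit the two sequences and to rule out asymptoticity. Since $z=u\text{-}\lim_{g}gy$ lies in $c\ell_X{Gy}$ and $X$ is metric, there is a sequence $\{t_n\}$ in $G$ with $t_ny\to z$, giving $\lim_n d(t_ny,z)=0$. For the separation, I would for each $\eta>0$ pick $a,b\in{\mathcal{C}}_{\mathcal{F}}(x)$ with $d(a,b)>\mathrm{diam}({\mathcal{C}}_{\mathcal{F}}(x))-\eta$; the triangle inequality forces $\max\{d(z,a),d(z,b)\}>\tfrac12(\mathrm{diam}({\mathcal{C}}_{\mathcal{F}}(x))-\eta)$, and compactness yields a limiting point $w\in{\mathcal{C}}_{\mathcal{F}}(x)$ with $d(z,w)\ge\tfrac12\mathrm{diam}({\mathcal{C}}_{\mathcal{F}}(x))$. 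Transitivity of $y$ then provides $\{s_n\}$ with $s_ny\to w$, so $\lim_n d(s_ny,z)\ge\tfrac12\mathrm{diam}({\mathcal{C}}_{\mathcal{F}}(x))$. Finally $\{y,z\}$ cannot be asymptotic: otherwise Proposition~\ref{prop:MCA-approx} would force ${\mathcal{C}}_{\mathcal{F}}(y)={\mathcal{C}}_{\mathcal{F}}(z)$, yet ${\mathcal{C}}_{\mathcal{F}}(z)\subseteq c\ell_X{Gz}=\Delta\subsetneq{\mathcal{C}}_{\mathcal{F}}(x)={\mathcal{C}}_{\mathcal{F}}(y)$ by Corollary~\ref{cor2.9} and the non-minimality of ${\mathcal{C}}_{\mathcal{F}}(x)$, a contradiction. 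Hence $\{y,z\}$ is a Li-Yorke chaotic pair.

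For the last assertion, suppose $G$ is abelian. S-genericity furnishes a point $x_0\in{\mathcal{C}}_{\mathcal{F}}(x)$ with ${\mathcal{C}}_{\mathcal{F}}(x_0)={\mathcal{C}}_{\mathcal{F}}(x)$, so $x_0\in S_x$ and, by Corollary~\ref{cor2.9}, $c\ell_X{Gx_0}={\mathcal{C}}_{\mathcal{F}}(x)$. By the commutative case of Corollary~\ref{cor2.8}, ${\mathcal{C}}_{\mathcal{F}}(gx_0)={\mathcal{C}}_{\mathcal{F}}(x_0)={\mathcal{C}}_{\mathcal{F}}(x)$ for every $g\in G$, while $gx_0\in{\mathcal{C}}_{\mathcal{F}}(x)$ by $G$-invariance; thus $Gx_0\subseteq S_x$. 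Since $Gx_0$ is dense in $c\ell_X{Gx_0}={\mathcal{C}}_{\mathcal{F}}(x)$, so is $S_x$. I expect the main obstacle to be the semigroup step—checking that $F$ is a closed subsemigroup and that the idempotent $u$ indeed satisfies $uy=uz=z$—whereas the diameter estimate and the density argument are comparatively routine.
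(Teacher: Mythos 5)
Your proof is correct and takes essentially the same route as the paper's: the paper gets the proximal point $z\in\Delta$ by appealing to its Theorem~\ref{thm3.2} argument together with Furstenberg's Proposition 8.6, which is precisely the idempotent construction $F=\{p\in\beta G : py\in\Delta\}$ you carry out inline, and the non-asymptoticity (via Proposition~\ref{prop:MCA-approx}), the two sequences (via transitivity of $y$ from Corollary~\ref{cor2.9}), and the density of $S_x$ in the abelian case (via Corollary~\ref{cor2.8}) are handled identically. If anything, your write-up fills in two details the paper leaves implicit: the compactness argument producing a point $w$ with $d(z,w)\ge\tfrac12\mathrm{diam}({\mathcal{C}}_{\mathcal{F}}(x))$, and the explicit contradiction ${\mathcal{C}}_{\mathcal{F}}(z)\subseteq\Delta\subsetneq{\mathcal{C}}_{\mathcal{F}}(x)={\mathcal{C}}_{\mathcal{F}}(y)$ ruling out asymptoticity.
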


\begin{proof}
Since ${\mathcal{C}}_{\mathcal{F}}(x)$ is S-generic,
it follows that  there exists $y\in {\mathcal{C}}_{\mathcal{F}}(x)$ with
${\mathcal{C}}_{\mathcal{F}}(y) = {\mathcal{C}}_{\mathcal{F}}(x)$.
Because ${\mathcal{C}}_{\mathcal{F}}(x)$ is not a minimal subset of $(G,X)$,
$c\ell_X{Gz}$ is not minimal for each $z\in S_x$ by Corollary~\ref{cor2.9}.

Let $\Delta$ be a minimal subset of ${\mathcal{C}}_{\mathcal{F}}(x)$ for $(G,X)$.
Then each $z\in S_x$ is proximal to $\Delta$ following by the proof of  Theorem~\ref{thm3.2}.
Moreover
by  ~\cite[Proposition 8.6]{F-81},
it follows that for every $z\in S_x$ ,
there corresponds some point
$y\in \Delta$ such that $\{z,y\}$ is proximal and $y$ is almost periodic.
Clearly, the pair $\{z, y\}$ is a Li-Yorke chaotic for $(G,X)$.
Furthermore, from Lemma~\ref{lem1.2},
we can conclude that there exist two sequences $\{t_n\}_1^\infty$ and $\{s_n\}_1^\infty$ in $G$ such that
\[\lim_{n\to+\infty}d(t_n z, y)
\geq\frac{1}{2}\text{diam}({\mathcal{C}}_{\mathcal{F}}(x))\quad \text{and}\quad \lim_{n\to+\infty}d(s_n z, y)=0.\]
In addition, if $G$ is commutative, then by Corollary~\ref{cor2.8} we see that $S_x$ is dense in ${\mathcal{C}}_{\mathcal{F}}(x)$.

The proof of Theorem~\ref{thm:not-generic-LY} is thus completed.
\end{proof}

Therefore by Theorems~\ref{thm3.2} and~\ref{thm:not-generic-LY},
it follows that every orbit $Gx$ admits at least one F-chaotic pair if its minimal $\mathcal{F}$-center
of attraction ${\mathcal{C}}_{\mathcal{F}}(x)$ is not a minimal subset of $(G,X)$.

Next we shall show that another kind of chaotic dynamics appears inside a minimal
center of attraction which is S-generic but not minimal for $(G,X)$.
Recall that 
\begin{itemize}
\item $(G,X)$ is called {\it Auslander-Yorke chaotic}
if it is point transitive and sensitive to initial conditions (see~\cite{Auslander-Yorke} for $\mathbb{Z}_+$-systems and \cite{Dai-2} for $\mathbb{R}_+$-systems);
$(G,X)$ is {\it topologically ergodic}
if for any nonempty open sets $U$ and $V$ of $X$,
$N(U,V)$ is syndetic in $G$.
\end{itemize}
The following is a generalization of \cite[Lemma~3.5]{Dai-2}.

\begin{lem}\label{lem3.6}
If ${\mathcal{C}}_{\mathcal{F}}(x)= X$,
then $(G,X)$ is topologically ergodic.
\end{lem}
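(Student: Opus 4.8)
The goal is to establish the defining condition of topological ergodicity: for every pair of nonempty open sets $U,V\subseteq X$ the return set $N(U,V)$ is syndetic in $G$. The hypothesis $\mathcal{C}_{\mathcal{F}}(x)=X$ enters only through Lemma~\ref{lem1.2}: since every $y\in X$ lies in $\mathcal{C}_{\mathcal{F}}(x)$, we have $\overline{d}_{\mathcal{F}}(N(x,W))>0$ for each $W\in\mathscr{U}_y$, so that in fact \emph{every} nonempty open $W\subseteq X$ satisfies $\overline{d}_{\mathcal{F}}(N(x,W))>0$; in particular each such return set is nonempty and has positive upper density along $\mathcal{F}$. By Corollary~\ref{cor2.9} one also has $X=c\ell_X{Gx}$, so $Gx$ meets every open set, but this transitivity alone is far from enough for syndeticity.

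The first, elementary observation is the inclusion $N(x,V)\,N(x,U)^{-1}\subseteq N(U,V)$: if $ax\in U$ and $bx\in V$ then $(ba^{-1})(ax)=bx\in V$ with $ax\in U$, whence $ba^{-1}\in N(U,V)$. Writing $A=N(x,U)$ and $B=N(x,V)$, this shows that syndeticity of $N(U,V)$ would follow from the purely combinatorial fact that a difference set $BA^{-1}$ of two sets of positive upper $\mathcal{F}$-density is syndetic. I would not attack this combinatorially but instead pass to an invariant measure, which makes the positive density usable.

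Averaging the orbit measures $\nu_n=\frac{1}{|F_n|}\sum_{g\in F_n}\delta_{gx}$ along $\mathcal{F}$ yields, by the F{\o}lner property together with Proposition~\ref{prop:sup-subnet} and Lemma~\ref{lem2.6}, weak$^{*}$ cluster points that are $G$-invariant Borel probability measures. Since $\nu_n(W)=|N(x,W)\cap F_n|/|F_n|$ and $\overline{d}_{\mathcal{F}}(N(x,W))>0$, each nonempty open $W$ is charged by some invariant cluster point (using a closed neighbourhood inside $W$ to move the density inequality across the weak$^{*}$ limit), and combining such measures over a countable base of $X$ produces a single $G$-invariant measure $\mu$ of full support. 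Then $\mu(U),\mu(V)>0$, and because $\mu(U\cap g^{-1}V)>0$ forces $U\cap g^{-1}V\neq\varnothing$, we obtain $\{g\in G\,|\,\mu(U\cap g^{-1}V)>0\}\subseteq N(U,V)$.

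The remaining and decisive step is to show that $R=\{g\in G\,|\,\mu(U\cap g^{-1}V)>0\}$ is syndetic, i.e. a Khintchine-type recurrence theorem for the amenable measure-preserving system $(X,\mathscr{B},\mu,G)$; this is the main obstacle, and it is genuinely where the weight lies. Positive density of $A$ and $B$ does \emph{not} by itself yield syndeticity, and the naive disjoint-translate counting fails precisely because, under a measure-preserving action, the translates $g^{-1}V$ \emph{recur} (Poincar\'e) rather than separate, so one can never exhibit more than a bounded number of pairwise disjoint copies. I would instead prove syndeticity of $R$ through the Hilbert-space structure: the correlation $g\mapsto\langle\mathbf{1}_U,\,g\,\mathbf{1}_V\rangle_{L^2(\mu)}$ splits, via the decomposition of $L^2(\mu)$ into its $G$-invariant part and its compact (almost periodic) part, into a piece whose F{\o}lner average is at least $\mu(U)\mu(V)>0$ by the mean ergodic theorem and an almost periodic piece whose near-returns to its mean form a syndetic set; intersecting these forces $R$, and hence $N(U,V)$, to be syndetic. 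Everything outside this recurrence input—the two inclusions and the construction of $\mu$—is routine.
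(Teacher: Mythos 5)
Your preparatory steps are sound: positive upper $\mathcal{F}$-density of $N(x,W)$ for every nonempty open $W$ (Lemma~\ref{lem1.2}), the construction of $G$-invariant measures as weak$^*$ cluster points of the averages $\nu_n$ along the F{\o}lner net, and the passage to a fully supported invariant $\mu$ by a countable convex combination; note that full support makes your set $R=\{g\,|\,\mu(U\cap g^{-1}V)>0\}$ literally equal to $N(U,V)$. The gap is exactly at the step you call decisive. For two \emph{different} sets $U\neq V$, the mean ergodic theorem gives that the F{\o}lner averages of $\langle \mathbf{1}_U, g\mathbf{1}_V\rangle_{L^2(\mu)}$ converge to $\langle P\mathbf{1}_U,P\mathbf{1}_V\rangle=\int \mathbb{E}[\mathbf{1}_U|\mathcal{I}]\,\mathbb{E}[\mathbf{1}_V|\mathcal{I}]\,d\mu$, where $P$ is the projection onto the $G$-invariant functions. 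This quantity is bounded below by $\mu(U)\mu(V)$ only when $\mu$ is ergodic (or when $U=V$, by Cauchy--Schwarz); for a non-ergodic $\mu$ it can vanish, because the ergodic components may separate $U$ from $V$. Your $\mu$ is a countable mixture of cluster-point measures and has no reason whatsoever to be ergodic, and a fully supported \emph{ergodic} measure need not exist in this generality. So the ``piece whose F{\o}lner average is at least $\mu(U)\mu(V)$'' is simply not available from the mean ergodic theorem. The appeal to the compact part does not repair this: syndeticity of the near-return times of an almost periodic vector cannot be ``intersected'' with anything to force syndeticity of $R$ --- two syndetic sets can even be disjoint (evens and odds in $\mathbb{Z}$) --- and in the standard Khintchine argument the compact part plays no role at all.

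The missing idea --- and it is precisely what the paper does --- is to reduce to a \emph{single} set before invoking any recurrence. By transitivity of $(G,X)$ (Lemma~\ref{lem1.2} with Proposition~\ref{prop:point-tran-equi-trans}) choose $g_0$ with $U':=U\cap g_0^{-1}V\neq\emptyset$; then $N(U,V)\supseteq g_0N(U',U')$, and since supersets and left translates of syndetic sets are syndetic, it suffices to show $N(U',U')$ is syndetic. For a single set, Cauchy--Schwarz gives $\|P\mathbf{1}_{U'}\|^2\geq\mu(U')^2>0$ with no ergodicity assumption, and your Hilbert-space strategy then does go through: if $N(U',U')$ were not syndetic its complement would be thick, hence would contain right translates $F_nt_n$ of the F{\o}lner sets, which form another F{\o}lner net, and averaging along it would contradict the positive limit. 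The paper's own proof makes the same reduction to $U'$ and also produces an invariant $\mu$ with $\mu(U')>0$, but then avoids even the mean ergodic theorem: to show $N(U',U')$ meets an arbitrary thick set $P$ it builds products $g_n=p_1\cdots p_n$ whose consecutive quotients $p_{m+1}\cdots p_n$ all lie in $P$, and applies the pigeonhole principle --- infinitely many translates $g_nU'$ of a set of measure $\mu(U')>0$ cannot be pairwise $\mu$-disjoint --- to produce an element of $N(U',U')\cap P$. In short: your overall plan (invariant measure plus measure-theoretic recurrence) is the right one and close in spirit to the paper's, but the two-set recurrence you rely on is false without ergodicity, and the one-set reduction via $U'=U\cap g_0^{-1}V$ is the step that cannot be skipped.
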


\begin{proof}
Given any two nonempty open sets $U$, $V$ of $X$.
By Lemma~\ref{lem1.2}, the orbit $Gx$ is dense in $X$.
From Proposition~\ref{prop:point-tran-equi-trans},
one can take some $g\in G$  so that $U':= U \cap g^{-1}V\neq\emptyset$.
Whence
$N(U, V)\supset gN(U',U')$
and so it is sufficient to show that $N(U',U')$ is syndetic.
For this,
let $P$ with $P\not=G$ be any thick subset of $G$.
Now, we need to show that $N(U',U')\cap P\neq\emptyset$.
By Lemma~\ref{lem1.2} and Proposition~\ref{prop:sup-subnet},
it follows
that there exists a subnet $\{F_{n_k}\}_{k\in E}$ of $\mathcal{F}$ such that
\[\lim_{k\in E}\frac{|N(x,U')\cap F_{n_k}|}{|F_{n_k}|}>0.\]
Then by a standard argument of ergodic theory,
it follows that there exists a $G$-invariant Borel probability measure
$\mu$ on $X$ such that $\mu(U') > 0$.


Choose an element $p_1\in P$ with $p_1\not=e$.
Since $P$ is thick, there exists $p_2\in G$ with $p_2\not=e$ such that $p_2$, $p_1p_2\in P$.
Again from the thickness of $P$,
there exists $p_3\in G$ with $p_3\not=e$ such that
\[p_3,p_1p_3,p_2p_3, p_1p_2p_3\in P.\]
Inductively, we obtain a sequence $\{p_n\}_1^\infty$ with $p_n\not=e$ and
with
\[p_1,p_2, p_1p_2, p_3,p_1p_3,p_2p_3, p_1p_2p_3,\ldots \in P.\]
For each $n\in\mathbb{N}$,
take $g_n=p_1\cdots p_n$.

If $\{g_n\}_1^\infty$ has only finite many distinct values in $G$,
then there exist $m,n$ with $m<n$ such that $g_n=g_m$.
Thus  $g_nU'=g_mU'$ and $p_{m+1}\cdots p_n\in N(U',U')$.
Therefore $N(U',U')\cap P\neq\emptyset$ as $p_{m+1}\cdots p_n\in P$.

Now assume $\{g_n\}_1^\infty$ has infinite many distinct values in $G$.
In this case,
there exists a subsequence $\{g_{n_k}\}_{k=1}^\infty$ with $g_{n_i}\neq g_{n_j}$ for every $i\neq j$.
Since $\mu(U^\prime)>0$ and $\mu$ is a probability invariant measure,
it follows that $\mu(g_{n_{k'}}U^\prime\cap g_{n_{k''}}U^\prime)>0$ for some $k'>k''$.
This implies that $p_{n_{k''}+1}\cdots p_{n_{k'}}\in N(U',U')$.
Therefore we again have $N(U',U')\cap P\neq \varnothing$.

This proves Lemma~\ref{lem3.6}.
\end{proof}

The following result asserts that the Auslander-Yorke chaotic dynamics occurs inside a minimal center of attraction
if it is S-generic but not minimal for $(G,X)$.

\begin{thm}\label{thm:generic-AY}
If ${\mathcal{C}}_{\mathcal{F}}(x)$ is an S-generic
non-minimal subset of $(G,X)$, then $(G,{\mathcal{C}}_{\mathcal{F}}(x))$ is
point transitive and sensitive in the following sense:
one can find an $\epsilon>0$ such that
for any $z\in X$ there exist a dense subset $U_\epsilon(z)$ of $X$ such that for each $y\in U_\epsilon(z)$
there exists a sequence $\{g_n\}_1^\infty$ in $G$ such that $\lim_{n\to+\infty}d(g_n z, g_n y)\geq \epsilon$.
Specially, $(G,{\mathcal{C}}_{\mathcal{F}}(x))$ is Auslander-Yorke chaotic.
\end{thm}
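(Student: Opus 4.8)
The plan is to establish the two halves of Auslander-Yorke chaos—point transitivity and sensitivity—separately, working entirely inside the subsystem $(G,{\mathcal{C}}_{\mathcal{F}}(x))$, which we may regard as a $G$-system by Corollary~\ref{cor2.8}. First I would secure point transitivity. Since ${\mathcal{C}}_{\mathcal{F}}(x)$ is S-generic, there exists $y\in{\mathcal{C}}_{\mathcal{F}}(x)$ with ${\mathcal{C}}_{\mathcal{F}}(y)={\mathcal{C}}_{\mathcal{F}}(x)$; for such a $y$ we have $y\in{\mathcal{C}}_{\mathcal{F}}(y)$, so Corollary~\ref{cor2.9} gives $c\ell_X{Gy}={\mathcal{C}}_{\mathcal{F}}(y)={\mathcal{C}}_{\mathcal{F}}(x)$. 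Thus $y$ is a transitive point of $(G,{\mathcal{C}}_{\mathcal{F}}(x))$, and the system is point transitive. By Proposition~\ref{prop:point-tran-equi-trans} the transitive points form a dense $G_\delta$, and Lemma~\ref{lem3.6} applied to the subsystem (where ${\mathcal{C}}_{\mathcal{F}}(x)$ plays the role of the whole space) shows $(G,{\mathcal{C}}_{\mathcal{F}}(x))$ is topologically ergodic.

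Next I would produce the sensitive constant $\epsilon$. Because ${\mathcal{C}}_{\mathcal{F}}(x)$ is non-minimal, it contains a proper minimal subset, hence two distinct minimal sets (or at least a minimal set together with a point outside it), yielding two closed $G$-invariant pieces at positive distance; as in the proof of Theorem~\ref{thm3.4} this furnishes a $\delta>0$ with the property that every point of ${\mathcal{C}}_{\mathcal{F}}(x)$ lies at distance at least $\delta$ from some orbit closure inside ${\mathcal{C}}_{\mathcal{F}}(x)$. I set $\epsilon=\delta/4$ (or a comparable fraction). The topological ergodicity from the previous paragraph, combined with the syndeticity of return times and the argument of Theorem~\ref{thm3.4}, gives that for any $z$ and any open $U$ one can steer points of $U$ so that their orbits separate from the orbit of $z$ by at least $\epsilon$ along some sequence $\{g_n\}$.

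To assemble the dense set $U_\epsilon(z)$ for a fixed $z$, I would proceed as follows. For each basic open set $U$ of ${\mathcal{C}}_{\mathcal{F}}(x)$, the sensitivity mechanism supplies a point $y_U\in U$ and a sequence $\{g_n\}$ with $\lim_{n\to+\infty}d(g_nz,g_ny_U)\ge\epsilon$. Ranging $U$ over a countable base and collecting these witnesses produces a dense subset $U_\epsilon(z)$ of ${\mathcal{C}}_{\mathcal{F}}(x)$, each of whose points $y$ admits a sequence $\{g_n\}$ with $\lim_{n\to+\infty}d(g_nz,g_ny)\ge\epsilon$, which is exactly the asserted sensitivity. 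The combination of point transitivity and this sensitivity is, by definition, Auslander-Yorke chaos, completing the proof.

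The main obstacle I anticipate is the sensitivity step: one must be careful that the separation $\epsilon$ is uniform and independent of $z$ and $U$, and that the topological-ergodicity/syndeticity input correctly transfers to the subsystem $(G,{\mathcal{C}}_{\mathcal{F}}(x))$ rather than to $(G,X)$. Getting the triangle-inequality bookkeeping right—so that \emph{either} $z$ \emph{or} the chosen $y$ separates from a fixed invariant piece by the full $\epsilon$, uniformly over the starting point $\hat x$—is the delicate part, exactly as in Theorem~\ref{thm3.4}, and reusing that argument verbatim inside the subsystem is where the care is needed.
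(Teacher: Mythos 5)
Your first half (point transitivity) is correct and matches the paper: S-genericity gives a point $y$ with ${\mathcal{C}}_{\mathcal{F}}(y)={\mathcal{C}}_{\mathcal{F}}(x)$, Corollary~\ref{cor2.9} makes $y$ a transitive point of the subsystem, and Lemma~\ref{lem3.6} applied to $(G,{\mathcal{C}}_{\mathcal{F}}(x))$ gives topological ergodicity. The sensitivity half, however, has a genuine gap, in two parts. First, you derive the separation constant from ``two distinct minimal sets (or at least a minimal set together with a point outside it), yielding two closed $G$-invariant pieces at positive distance.'' Non-minimality alone does not give two minimal sets, and a point outside a minimal set does not give a second invariant piece: if ${\mathcal{C}}_{\mathcal{F}}(x)$ has a \emph{unique} minimal set $K$, then every orbit closure contains $K$, so no two disjoint closed invariant sets exist, and your claimed property (every point lies at distance at least $\delta$ from some orbit closure) fails at the points of $K$. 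Theorem~\ref{thm3.4} could legitimately invoke two distinct minimal points only because it assumes dense almost periodic points --- a hypothesis absent from Theorem~\ref{thm:generic-AY}. The paper instead uses one minimal set $K$ together with an arbitrary point $w$ with $d(w,K)>0$ ($w$ need not lie in any invariant set), exploiting an asymmetry: $N(\hat{x},B_r(K))$ is thick by Proposition~\ref{prop:invar-thick} because $K$ is invariant and lies in the orbit closure of the transitive point $\hat{x}$, while $N(U,B_r(w))$ is syndetic by Lemma~\ref{lem3.6}, and thick sets meet syndetic sets.

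Second, ``reusing the argument of Theorem~\ref{thm3.4} verbatim'' cannot produce what the statement demands: besides needing dense almost periodic points, that argument yields only finitely many group elements $g$ (a set of the form $F'trt^{-1}$) with one-shot separation $d(ga,gy)\geq\epsilon$, whereas here you must exhibit, for a single witness $y\in U$, an infinite sequence of distinct $g_n$ with $\lim_{n\to+\infty}d(g_nz,g_ny)\geq\epsilon$. The missing idea is the paper's nested construction: choose distinct $g_{n+1}\in N(\hat{x},B_r(K))\cap N(U_n,B_r(w))$ and shrinking open sets with $c\ell_X U_{n+1}\subset U_n\cap g_{n+1}^{-1}(B_r(w))$; any $\hat{y}\in\bigcap_{n}U_n$ then satisfies $d(g_n\hat{x},g_n\hat{y})\geq r$ for \emph{all} $n$ simultaneously. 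For an arbitrary $z$, the triangle inequality plus pigeonhole gives a subsequence of $\{g_n\}$ along which $z$ separates by $\epsilon=r/2$ from one of the two points $\hat{x},\hat{y}$, both of which lie in $U$; that point is the required element of $U_\epsilon(z)$, and density follows since $U$ was arbitrary. Without this iteration (or an equivalent one), your witnesses $y_U$ are never actually constructed, so the dense set $U_\epsilon(z)$ is not obtained.
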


\begin{proof}
Suppose that ${\mathcal{C}}_{\mathcal{F}}(x)$ is an S-generic
non-minimal subset of $(G,X)$.
Without loss of generality, we may assume that ${\mathcal{C}}_{\mathcal{F}}(x)=X$.
By Lemma~\ref{lem1.2},
it follows that the system $(G,X)$ is point transitive and $x$ is a transitive point.
Hence  we only
need to prove that the sensitivity of $(G,X)$.

Since $(G,X)$ is not minimal, by Lemma~\ref{lem1.2},
 one can find a minimal subset, say $K$, of $(G,X)$.
Fix some point $w\in X$ with $\alpha:=d(w, K)>0 $.
Let $r>0$ with $r={\alpha}/{3}$ and $U$ be any nonempty open subset of
$X$.
As $x$ is a transitive, there exists $t\in G$ such that $\hat{x}:=tx\in U$.
It is clear that $\hat{x}$ is also a transitive point.
Since $K$ is $G$-invariant,
it follows that the set $N(\hat{x}, B_r(K))$ is thick by Proposition~\ref{prop:invar-thick}.
Since $N(U, B_r(w))$ is syndetic in $G$ by Lemma~\ref{lem3.6},
we can choose an element
$g_1\in N(\hat{x}, B_r(K))\cap N(U,B_r(w))$;
and choose a nonempty open set $U_1$ in $X$ with
\[c\ell_X{U}_1\subset U\quad \text{and}\quad
c\ell_X{U}_1\subset U \cap g_1^{-1}(B_r(w)).\]
Then we can choose some element $g_2$ such that $g_1\neq g_2$ and
$g_2\in N(\hat{x}, B_r(K))\cap N(U_1,B_r(w))$;
and choose a nonempty open set $U_2$ in $X$ with $c\ell_X{U}_2\subset U_1$ and such that
$c\ell_X{U}_2\subset U_1 \cap g_2^{-1}(B_r(w))$.
Repeating this construction without end, we can then find a sequence of open nonempty sets $\{U_n\}_1^\infty$
in $X$ and a sequence $\{g_n\}_1^\infty$ of $G$ with
\[U\supset c\ell_X{U}_1\supset U_1\supset c\ell_X{U}_2\supset U_2\supset\cdots,\quad c\ell_X{U}_{n+1}\subset U_n \cap g_{n+1}^{-1}(B_r(w)).\]
and $g_i\neq g_j$ for each $i\neq j$.
Then $\cap_{n=1}^\infty U_n\neq\emptyset$ and for any $\hat{y}\in\cap_{n=1}^\infty U_n$, there are
\[\hat{y}\in U \quad \text{and}\quad d(g_n \hat{x}, g_n \hat{y})\geq r\  \forall n\in \mathbb{N}.\]
Let $\epsilon={r}/{2}$.
Then for any $z\in X$ and any nonempty open set $U$,
one can find some point $y\in U$ and a sequence $\{g_n\}_1^\infty$ in $G$ such that
\[\lim_{n\to+\infty}d(g_n z, g_n y)\geq \epsilon.\]
Finally for any $z\in X$,
let
\[U_\epsilon(z)=\left\{y\in X\,|\,{\lim}_{n\to+\infty}d(g_n z, g_n y)\geq \epsilon \text{~for some sequence~} \{g_n\}_1^\infty \text{~in~} G\right\}.\]
Since the open subset $U$ is arbitrary,
it follows that $N_\epsilon(z)$ is a dense subset in $X$.

The proof of Theorem~\ref{thm:generic-AY} is thus completed.
\end{proof}
\section{Li-Yorke chaotic pairs and sensitivity (II)}\label{sec4}

Inspired by~\cite{X,zh-ye,WCF-15}, in this section,
we will show that if ${\mathcal{C}}_{\mathcal{F}}(x)$ is S-generic and non-minimal,
then ${\mathcal{C}}_{\mathcal{F}}(x)$ exhibits more complicated sensitivity than the non-S-generic case in Theorem~\ref{thm3.4}.

Let $(G,X)$ be a $G$-system and $\mathcal{F}=\{F_n\}_{n\in D}$ a F{\o}lner net in $G$.
For any tuple $(x_1,\ldots,x_n)$ in $X^n$,
we define a subset of $X$, write $L(x_1,\ldots,x_n)$, by
$x\in L(x_1,\ldots,x_n)$ if and only if for any $U_i\in\mathscr{U}_{x_i}$ and $U\in\mathscr{U}_x$,
there exist $x_i'\in U$ and $g\in G$ such that $(gx_1',\ldots,gx_n')\in U_1\times\cdots\times U_n$.

For our convenience, let us restate Theorem~\ref{thm1.6} as follows:

\begin{rthm1.6}
Let $G$ be commutative.
If ${\mathcal{C}}_{\mathcal{F}}(x)$ is S-generic and non-minimal,
and almost periodic points of $(G,X)$ are dense in ${\mathcal{C}}_{\mathcal{F}}(x)$,
then $(G,X)$ has $\aleph_0$-sensitivity near ${\mathcal{C}}_{\mathcal{F}}(x)$ in the following sense:
\begin{itemize}
\item One can find  an infinite countable subset $K$ of ${\mathcal{C}}_{\mathcal{F}}(x)$
such that for
any $k$ distinct points $x_1,\ldots,x_k\in K$ with $k\geq 2$,
there holds ${\mathcal{C}}_{\mathcal{F}}(x)\subseteq L(x_1,\ldots,x_k)$.
\end{itemize}
\end{rthm1.6}

\begin{proof}
Since ${\mathcal{C}}_{\mathcal{F}}(x)$ is S-generic,
there exists a point $y\in {\mathcal{C}}_{\mathcal{F}}(x)$ with ${\mathcal{C}}_{\mathcal{F}}(y)={\mathcal{C}}_{\mathcal{F}}(x)$
and thus $y$ is a transitive point in the subsystem $(G,{\mathcal{C}}_{\mathcal{F}}(x))$. We will divide our discussion into $5$ claims.

\begin{claim4}\label{cl4.1}
For any $z\in {\mathcal{C}}_{\mathcal{F}}(x)$
and $U\in\mathscr{U}_z$, $N(y,U)$ is piecewise syndetic in $G$.
\end{claim4}

\begin{proof}
By \cite[Lemma~3.7]{Dai-3}, it follows that one can find a syndetic set $S$ in $G$ such that for any finite set $B\subset S$, there is some $g_b^{}\in G$ with $Bg_b^{}\subset N(y,U)$. In addition, there is a finite set $F\subset G$ with $F^{-1}S=G$. Now for any finite set $A\subset G$, we can write
\begin{gather*}
A=A_1\cup\dotsm\cup A_n\quad \textrm{with }A_i\subset f_i^{-1}S, f_i\in F,\ i=1,\dotsc,n.
\end{gather*}
Then for $B=f_1A_1\cup\dotsm\cup f_nA_n\subset S$, we have $Bg_a^{}\subset N(y,U)$ for some $g_a^{}\in G$. Then for all $i=1,\dotsc,n$,
\begin{gather*}
f_iA_ig_a^{}\subset N(y,U)
\end{gather*}
and so 
\begin{gather*}
A_ig_a^{}\subset f_i^{-1}N(y,U)\subseteq F^{-1}N(y,U).
\end{gather*}
Thus $Ag_a^{}\subset F^{-1}N(y,U)$. This proves Claim~\ref{cl4.1}.
\end{proof}

\begin{claim4}\label{cl4.2}
There are infinite many distinct minimal subsets in ${\mathcal{C}}_{\mathcal{F}}(x)$.
\end{claim4}

\begin{proof}
This follows from the density of almost periodic points of $(G,X)$ in ${\mathcal{C}}_{\mathcal{F}}(x)$.
\end{proof}

Now let $\{M_k\}_{k=1}^\infty$ be a sequence of minimal subsets of ${\mathcal{C}}_{\mathcal{F}}(x)$ with $M_i\neq M_j$
for every $i\neq j$.

\begin{claim4}\label{cl4.3}
For any $\delta>0$, $z\in {\mathcal{C}}_{\mathcal{F}}(x)$, and any $U\in\mathscr{U}_z$, both
$N(y,B_\delta(M_k)), N(U,B_\delta(M_k))$ are thick in $G$ for each $k\ge1$.
\end{claim4}

\begin{proof}
Since $M_k$ is an invariant closed subset,
Proposition~\ref{prop:invar-thick} follows that $N(y,B_\delta(M_k))$ and then $N(U,B_\delta(M_k))$ both are thick in $G$.
\end{proof}

\begin{claim4}\label{cl4.4}
For any $k\geq 2$, $\delta>0$, $z\in {\mathcal{C}}_{\mathcal{F}}(x)$, and any $U\in\mathscr{U}_z$, we have
$N(U,B_\delta(M_1))\cap\cdots\cap N(U, B_\delta(M_k))\neq\emptyset$.
\end{claim4}

\begin{proof}
Let $k\geq 2, \delta>0, z\in {\mathcal{C}}_{\mathcal{F}}(x)$, and $U\in\mathscr{U}_z$ be arbitrarily given.
Since $y$ is a transitive point in ${\mathcal{C}}_{\mathcal{F}}(x)$, it follows that
$N(B_\delta(M_i),U)=N(y,U)N(y,B_\delta(M_i))^{-1}$ for $1\le i\le k$.
By Claim~\ref{cl4.1}, it follows that there exists a finite subset $F$ of $G$ such that
for every finite subset $L$ of $G$ there exists $g_L\in G$ with
$Lg_L\subset F^{-1}N(y,U)$.
Then by Claim~\ref{cl4.3}, it follows that for each $1\leq i\leq k$,
we can choose $g_i\in G$ such that
$Fg_i\subset N(y,B_\delta(M_i))$.
Take $L^\ast=\{g_i\}_{i=1}^k$.
Then there exists  $g_{L^\ast}\in G$ with
$L^\ast g_{L^\ast}\subset F^{-1}N(y,U)$.
Thus for each $i=1,\dotsc,k$, there exists $f_i\in F$ such that $f_ig_ig_{L^*}\in N(y,U)$
which implies that
\begin{align*}
N(y,U)N(y,B_\delta(M_i))^{-1} &\supset \bigcup_{g\in F}N(y,U)(gg_i)^{-1}\supset\left\{f_ig_ig_{L^*}(gg_i)^{-1}\,|\,g\in F\right\}\ni f_ig_i g_{L^\ast}(f_ig_i)^{-1}
=g_{L^\ast}.
\end{align*}
Thus $N(B_\delta(M_1),U)\cap\cdots\cap N(B_\delta(M_k),U)\neq\emptyset$
and further
$N(U,B_\delta(M_1))\cap\cdots\cap N(U,B_\delta(M_k))\neq\emptyset$.
This proves Claim~\ref{cl4.4}.
\end{proof}

\begin{claim4}\label{cl4.5}
For each $k\geq 2$, there exist
 $x_1\in M_1,\dotsc,x_k\in M_k$ such that ${\mathcal{C}}_{\mathcal{F}}(x)\subset L(x_1,\ldots,x_k)$.
\end{claim4}

\begin{proof}
Fix an arbitrary $k\geq 2$.
For each $n\in \mathbb{N}$, by Claim~\ref{cl4.4},
there exist
$y_{1,n},\ldots,y_{k,n}\in B_{{1}/{n}}(y)$
and $g_n\in G$
such that
\[g_ny_{1,n}\in B_{{1}/{n}}(M_1), \ldots, g_ny_{k,n}\in B_{{1}/{n}}(M_k).\]
Take
\[x_{1,n}=g_ny_{1,n},\ldots,x_{k,n}=g_ny_{k,n}.\]
Without loss of generality, we may assume that
\[x_1=\lim_{n\rightarrow\infty}x_{1,n},\ldots,x_k=\lim_{n\rightarrow\infty}x_{k,n}.\]
Then $x_1\in M_1$, \ldots, $x_k\in M_k$ and $y\in L(x_1,\ldots,x_k)$.

Let $z$ be an arbitrary point in ${\mathcal{C}}_{\mathcal{F}}(x)$.
Let $U_1,\ldots,U_k$ and $U$ be open neighborhoods of $x_1,\ldots,x_k$ and $z$ respectively.
Since $y$ is a transitive point,
there exists $l\in G$ such that $l^{-1}U$ is an open neighborhood of $y$.
Because $y\in L(x_1,\ldots,x_k)$,
there exist $y_1,\ldots,y_k\in l^{-1}U$ and $r\in G$ such that
$ry_1\in U_1,\ldots,ry_k\in U_k$ which implies that
\[(rl^{-1})ly_1\in U_1,\ldots,(rl^{-1})ly_k\in U_k.\]
Take $z_1=ly_1$,\ldots,$z_k=ly_k$  and $l^\ast=rl^{-1}$.
Then $z_1,\ldots,z_k\in U$ and
\[(l^\ast z_1,\ldots,l^\ast z_k)\in U_1\times \cdots\times U_k.\]
Thus $z\in L(x_1,\ldots,x_k)$.
Since $z$ is arbitrary, it follows that
\[{\mathcal{C}}_{\mathcal{F}}(x)\subset L(x_1,\ldots,x_k).\]
This ends the proof of Claim~\ref{cl4.5}.
\end{proof}

Finally for each $k\geq 2$, by Claim~\ref{cl4.5},
there exist points $x_{k,1},\ldots,x_{k,k}\in X$ so that
$x_{k,1}\in M_1,\ldots,x_{k,k}\in M_k$
and such that
${\mathcal{C}}_{\mathcal{F}}(x)\subset L(x_{k,1},\ldots,x_{k,k})$.
Thus
$\{x_{2,1},x_{3,1},\ldots\}\subset M_1$.
Assume $\lim_{n\to+\infty}x_{n,1}=x_1\in M_1$ by consider a subsequence of $\{n_i\}$ if necessary.
Then
$\{x_{2,2},x_{3,2},\ldots\}\subset M_2$.
Assume that $\lim_{n\to+\infty}x_{n,2}=x_2\in M_2$.
Continuing this construction, we obtain an infinite countable set $K=\{x_1,x_2,\ldots\}$.
Again from Claim~\ref{cl4.5}, it follows that for any
$x_1,\ldots,x_k\in K$,
there holds
${\mathcal{C}}_{\mathcal{F}}(x)\subset L(x_1,\ldots,x_k)$ and this ends the proof of Theorem~\ref{thm1.6}.
\end{proof}

\begin{proof}[Proof of Theorem~\ref{thm1.5}]
First of all, we note that all of Claims~\ref{cl4.1}, \ref{cl4.2} and \ref{cl4.3} hold without the commutativity of $G$. Now
$N(U, B_\delta(M_i))\cap N(U, B_\delta(M_j))\not=\emptyset$ for $i\not=j$
by Claim~\ref{cl4.3} and Lemma~\ref{lem3.6}. Then the proof of Claim~\ref{cl4.5} implies Theorem~\ref{thm1.5}.
\end{proof}
\section{Three examples}\label{sec5}
In this section, we will firstly construct a simple example to exhibit that
if the minimal center of attraction of a $G$-system is not S-generic then it
may not admit the complex dynamics described in Theorems~\ref{thm:generic-AY}, \ref{thm1.5} and \ref{thm1.6}. Secondly, we shall present an example to show that the proof approach of Theorem~\ref{thm1.5} may not work for $\aleph_0$-sensitivity without the commutativity. Moreover, we will construct an example to show that relative to different F{\o}lner sequences in $\mathbb{Z}$, the minimal centers of attraction of a same point may be different.

\subsection{Example}\label{sec5.1}
Let $\varSigma_{2}=\bigl\{x=(x_{n})_{-\infty}^{\infty}\,|\, x_{n}\in\{0, 1\}\bigl\}$.
Consider a metric on $\varSigma_{2}$ given by
\[d(x,y)=2^{-n},\quad \text{where } n=\min\{i\geq 0\,|\, x_i\neq y_i \text{ or } x_{-i}\neq y_{-i}\}.\]
With respect to this metric, $\varSigma_{2}$ is homeomorphic to the Cantor set.
The standard {\it shift} $\sigma$ on $\varSigma_2$ is defined by
$\sigma\colon\varSigma_2\rightarrow \varSigma_2$,
$(x_{n})_{n=-\infty}^{\infty}\mapsto (x_{n+1})_{n=-\infty}^{\infty}$.
Then $\sigma$ is a homeomorphism from $\varSigma_2$ to itself and $(\sigma,\varSigma_2)$
is a symbolic dynamical system.

A {\it word} is a finite sequence of elements of $\{0,1\}$.
Fix an arbitrary $n\in\mathbb{N}$.
For any given words $w=w_1w_2\cdots w_n$, $u=u_1u_2\cdots u_{2n+1}$ and any $m\in\mathbb{Z}$,
denote
\[[w]_m^n=\{x\in\varSigma_{2}\,|\, x_mx_{m+1}\cdots x_{m+n-1}=w \}\]
and $|w|$ by the {\it cylinder} of $w$ at position $m$ and the {\it length} of $w$ respectively.
For simplicity, we  use $[u]_0$ instead of $[u]_{-n}^{2n+1}$.
The collection of cylinders forms a basis of the topology of $\varSigma_{2}$.
Denote $\overline{w}:=w_nw_{n-1}\cdots w_1$ for any word $w=w_1w_2\dotsm w_n$.

Take
\begin{align*}
&A_1=01, A_2=A_1\underbrace{0000\cdots 0}_{2|A_1|-times},
A_3=A_2\underbrace{1111\cdots 1}_{3|A_2|-times}, \ldots, A_{2n}=A_{2n-1}\underbrace{0000000\cdots 0}_{(2n)|A_{2n-1}|-times},\\
&A_{2n+1}=A_{2n}\underbrace{11111111\cdots 1}_{(2n+1)|A_{2n}|-times}, \ldots.
\end{align*}
Then
 \[[\overline{A_1}A_1]_0\supset [\overline{A_2}A_2]_0\supset\cdots\supset [\overline{A_n}A_n]_0\supset\cdots\]
 and
\[\lim_{n\rightarrow\infty} \text{diam}([\overline{A_n}A_n]_0)=0.\]
By the compactness of $\varSigma_{2}$,
the set $\bigcap_{n=0}^\infty [\overline{A_n}A_n]_0$  contains exactly one element, denote it by $x$.
For each $n\in \mathbb{N}$,
take $F_n=\{-n,\ldots, n\}$.

Let $0^\infty=(0)_{-\infty}^{+\infty}$ and $1^\infty=(1)_{-\infty}^{+\infty}$. It is not hard to verify that the set $\{0^\infty,1^\infty\}$ is closed and invariant in $(\sigma,\varSigma_2)$.
We claim that  ${\mathcal{C}}_{\mathcal{F}}(x)=\{0^\infty,1^\infty\}$.
Indeed, fix an arbitrary $k\geq 2$.
For any largely $M>|A_1|$,
there exists $r\in \mathbb{N}$ such that $|A_{r+1}|\geq M> |A_r|$
and then if $M-|A_r|\geq 2k+1$, we have
\begin{align*}
\frac{|N(x, [0^\infty]_{-k}^{2k+1}\cup [1^\infty]_{-k}^{2k+1})\cap F_{M}|}{|F_{M}|}
&\geq \frac{2(M-|A_{r-1}|-(2k+1))}{2M+1}\\
&\geq 1-\frac{|A_{r-1}|+2(2k+1)}{2r|A_{r-1}|+1};
\end{align*}
if $M-|A_r|< 2k+1$, we have
\begin{align*}
\frac{|N(x, [0^\infty]_{-k}^{2k+1}\cup [1^\infty]_{-k}^{2k+1})\cap F_{M}|}{|F_{M}|}
\geq &\frac{2(r|A_{r-1}|-(2k+1))}{2(|A_r|+2k+1)}\\
\geq& \frac{r|A_{r-1}|-(2k+1)}{(r+1)|A_{r-1}|+2k+1}\\
=&1-\frac{|A_{r-1}|+2(2k+1)}{(r+1)|A_{r-1}|+2k+1}.
\end{align*}
Let $M\rightarrow+\infty$, then
\[\frac{|N(x, [0^\infty]_{-k}^{2k+1}\cup [1^\infty]_{-k}^{2k+1})\cap F_{M}|}{|F_{M}|}\rightarrow 1.\]
Since $0^\infty,1^\infty$ are fixed points in $(\sigma,{\mathcal{C}}_{\mathcal{F}}(x))$,
it follows that ${\mathcal{C}}_{\mathcal{F}}(x)$ is not S-generic in $(\sigma,\varSigma_2)$ and $(\sigma,{\mathcal{C}}_{\mathcal{F}}(x))$ is not point transitive.
And as ${\mathcal{C}}_{\mathcal{F}}(x)$ is a finite subset in $X$,
it follows that it does not admit the complex dynamics stated
in Theorems~\ref{thm:generic-AY}, \ref{thm1.5} and \ref{thm1.6}.

\subsection{Example}
In the proof of Theorem~\ref{thm1.5}, we have utilized a simple but crucial fact: If $A, B$ are two syndetic and thick subsets of $\mathbb{Z}$, then $A\cap B\not=\emptyset$ is piecewise syndetic. But this is never the case for more than two sets.

Indeed, let
\begin{align*}
A&=\bigcup_{n=1}^\infty A_n\cup 10\mathbb{N}&& \textrm{where }A_n=\{10^n,10^n+1,\ldots,10^n+10n-1\};\\
\intertext{and}
B&=\bigcup_{n=1}^\infty B_n\cup(10\mathbb{N}-1)&&\textrm{where }
B_n=\{10^n+10n,10^n+10n+2,\ldots,10^n+10n+n-1\}.
\end{align*}
Then
\[A\cap B= \big(\bigcup_{n=1}^\infty A_n\cap(10\mathbb{N}-1)\big)\cup \big(\bigcup_{n=1}^\infty B_n\cap10\mathbb{N}\big).\]
Note that
\begin{equation*}
\min\{a\,|\,a\in A_{n+1}\}-\max\{b\,|\,b\in B_{n}\}-n=9\cdot10^n+1-11n\to+\infty
\end{equation*}
as $n\rightarrow+\infty$.

In addition let
\[C=\bigcup_{n=1}^\infty C_n\cup (10\mathbb{N}-2)\ \textrm{where }
C_n=\{10^n+10n+n, 10^n+10n+n+1,\ldots,10^{n+1}-1\}.\]
Then $C$ is not only thick but also syndetic in $\mathbb{N}$ by the above construction.
However,
\[C_n\cap A_m=\emptyset\quad \text{and}\quad C_n\cap B_m=\emptyset\]
for all $m,n\in \mathbb{N}$.
Thus $A\cap B\cap C=\emptyset$.

Denote $-D=\{-d\,|\,d\in D\}$ for any subset $D$ of $\mathbb{Z}$.
Let
\[A^\ast=A\cup (-A),\quad B^\ast=B\cup (-B)\quad \text{and}\quad C^\ast=C\cup (-C).\]
Then the sets $A^\ast$, $B^\ast$ and $C^\ast$ are all thick and syndetic in $\mathbb{Z}$ with $A^*\cap B^*\cap C^*=\emptyset$.

Therefore, there exist three subsets $A,B,C$ of $\mathbb{Z}$, which are all syndetic and thick in $\mathbb{Z}$,
such that $A\cap B\cap C=\emptyset$.

\subsection{Example}\label{sec5.3}
We now return to consider the canonical two-sided shift system $\sigma\colon\varSigma_2\rightarrow \varSigma_2$.
For any $n=0,1,2,\dotsc$, let
\begin{align*}
a_n&=n(n+1), &&b_n=n(n+2)+1;\\
R_n&=\{a_n,a_n+1,\ldots,a_n+n\}, &&R_n'=\{-a_n,-a_n-1,\ldots,-a_n-n\};\\
\intertext{and}G_n&=\{b_n,b_n+1,\ldots,b_n+n\}, && G_n'=\{-b_n,-b_n-1,\ldots,-b_n-n\}.
\end{align*}
Define two sequences $\mathcal{F}:=\{F_n\}_0^\infty$ and $\mathcal{H}:=\{H_n\}_0^\infty$ of finite subsets of $\mathbb{Z}$ by
$$F_{n}=\left\{\begin{array}{ll}
R_k, & \text{if } n=2k, \\
R_k', & \text{if }  n=2k+1;
\end{array}\right.$$
and
$$H_{n}=\left\{\begin{array}{ll}
G_k, & \text{if } n=2k, \\
G_k', & \text{if }  n=2k+1.
\end{array}\right.$$
Put $F=\bigcup_{n=0}^\infty F_n$ and $z=\mathbf{1}_F\in\varSigma_2$ where $\mathbf{1}_F(i)=1$ if $i\in F$ and
$\mathbf{1}_F(i)=0$ if $i\notin F$.
Then
\begin{enumerate}
\item $\mathcal{F}$ and $\mathcal{H}$ both are F{\o}lner sequences in $(\mathbb{Z},+)$;
\item ${\mathcal{C}}_{\mathcal{F}}(z)=\{1^{\infty}\}$ and ${\mathcal{C}}_{\mathcal{H}}(z)=\{0^{\infty}\}$.
\end{enumerate}

\begin{proof}
(1) Let $h\in \mathbb{Z}$.
Then
\[\lim_{n\rightarrow \infty}\frac{|(h+F_n)\vartriangle F_n|}{|F_n|}=\lim_{n\rightarrow \infty}\frac{2|h|}{|F_n|}=0\]
which implies that $\{F_n\}_0^\infty$ is a F{\o}lner sequence in $(\mathbb{Z},+)$.
Similarly, $\{H_n\}_0^\infty$ is also a F{\o}lner sequence in $(\mathbb{Z},+)$.

(2) It is easy to check that
$\{1^{\infty}\}$ is an $\sigma$-invariant closed subset of $\varSigma_{2}$.
Fix any $n\in \mathbb{N}$.
Then
\[B_{{1}/{2^n}}(\{1^{\infty}\})=\{x\in \varSigma_{2}\,|\,x_{-n}\cdots x_{-1}x_0 x_{1}\cdots x_n=\stackrel{n\textrm{-times}}{\overbrace{1\cdots 1}}1\stackrel{n\textrm{-times}}{\overbrace{1\cdots 1}}\}.\]
Thus
\[\frac{|N(z,B_{{1}/{2^n}}(\{1^{\infty}\}))\bigcap F_m|}{|F_m|}=\frac{|F_m|-2n-1}{|F_m|}\]
for every $m>4n$.
Hence
\[\lim_{m\rightarrow \infty}\frac{|N(z,B_{{1}/{2^n}}(\{1^{\infty}\}))\bigcap F_m|}{|F_m|}=1.\]
Therefore ${\mathcal{C}}_{\mathcal{F}}(z)=\{1^{\infty}\}$.

Similarly, we can obtain that ${\mathcal{C}}_{\mathcal{H}}(z)=\{0^{\infty}\}$. This thus completes the construction of Example~\ref{sec5.3}.
\end{proof}
\section*{Acknowledgements}
This work was supported partly by National Natural Science Foundation of China grants $\sharp$11431012 and $\sharp$11271183. The authors would like to thank the referee for her/his many helpful comments.


\bigskip

\begin{thebibliography}{MM}
\bibitem{Argabright}
   \newblock {L.~Argabright and C.~Wilde},
   \newblock {\it Semigroups satisfying a strong F{\o}lner condition}.
   \newblock {Proc. Amer. Math. Soc. \textbf{18} (1967), 587--591}.


\bibitem{Auslander-Yorke}
   \newblock {J.~Auslander and J.A.~Yorke},
      \newblock {\it Interval maps, factors of maps, and chaos}.
         \newblock {T\^{o}hoku Math. J. \textbf{32}(1980), 177--188.}

\bibitem{Banks-sensitive}
   \newblock {J.~Banks, J.~Brooks, G.~Cairns, G.~Davis and P.~Stacey},
      \newblock {\it On Devaney¡¯s definition of chaos}.
         \newblock {Amer. Math. Monthly  \textbf{99} (1992), 332--334.}

\bibitem{BHM}
   \newblock {V.~Bergelson, N.~Hindman and R.~McCutcheon},
   \newblock {\it Notions of size and combinatorial properties of quotient sets in semigroups}.
   \newblock {Topol. Proc. \textbf{23} (1998), 23-60}.

\bibitem{BM}
   \newblock {V.~Bergelson and R.~McCutcheon},
   \newblock {\it Recurrence for semigroup actions and a non-commutative Schur theorem}.
   \newblock {Topological Dynamics and Applications, Contemp. Math., Amer. Math. Soc., Providence, \textbf{215} (1998), 205-222}.

\bibitem{CD}
   \newblock {B.~Chen and X.~Dai},
   \newblock {\it On uniformly recurrent motions of topological semigroup actions}.
   \newblock {Discret. Contin. Dyn. Syst. \textbf{36} (2016), 2931--2944}.

\bibitem{Dai-1}
   \newblock {X.~Dai},
   \newblock {\it Chaotic dynamics of continuous-time topological semiflow on Polish spaces}.
   \newblock {J. Differential Equations \textbf{258} (2015), 2794--2805}.

\bibitem{Dai-2}
   \newblock {X.~Dai},
   \newblock {\it On chaotic minimal center of attraction of a Lagrange stable motion for topological semi flows}.
   \newblock {J. Differential Equations \textbf{260} (2016), 4393--4409}.

\bibitem{Dai-3}
   \newblock {X.~Dai},
   \newblock {\it Gr\"{u}nwald version of van der Waerden's theorem for semimodules}.
   \newblock {arXiv:1512.08695v2 [math.DS] 29 Dec 2015}.
   
\bibitem{DT}
   \newblock {X.~Dai and X.~Tang},
   \newblock {\it Devaney chaos, Li-Yorke chaos, and multi-dimensional Li-Yorke chaos for topological dynamics}.
   \newblock {J. Differential Equations \textbf{xxx} (2017), xx+33 http://dx.doi.org/10.1016/j.jde.2017.06.021}.

\bibitem{Devaney-89}
   \newblock {R.L.~Devaney},
   \newblock {\it An Introduction to Chaotic Dynamical Systems}, 2nd ed.
   \newblock {Addison-Wesley Studies in Nonlinearity, Addison-Wesley Publishing Company Advanced Book Program, Redwood City, CA, 1989}.

\bibitem {Ellis-semigroup}
   \newblock {D.~Ellis, R.~Ellis and M.~Nerurkar},
   \newblock {\it The topological dynamics of semigroup actions}.
   \newblock {Trans. Amer. Math. Soc.  \textbf{353} (2001), 1279--1320}.

\bibitem{F-81}
   \newblock {H.~Furstenberg},
   \newblock {\it Recurrence in Ergodic Theory and Combinatorial Number Theory}.
   \newblock {M.B.~Porter Lectures, Princeton University Press, Princeton, 1981}.

\bibitem{G93}
   \newblock {E.~Glasner and B.~Weiss},
      \newblock {\it Sensitive dependence on initial conditions}.
         \newblock {Nonlinearity \textbf{6} (1993), 1067--1075.}

\bibitem{Hilmy-36}
   \newblock {H.F.~Hilmy},
      \newblock {\it Sur les centres d¡¯attraction minimaux des syst`emes dynamiques}.
         \newblock {Compositio Math. \textbf{3} (1936), 227--238.}

\bibitem{Hindman}
   \newblock {N.~Hindman and D.~Strauss},
      \newblock \textit{Algebra in the Stone-\v{C}ech compactification: Theory and Applications}.
         \newblock de Gruyter, Berlin, 1998.

\bibitem{Hindman-06}
   \newblock {N.~Hindman and D.~Strauss},
      \newblock \textit{Density in arbitrary semigroups}.
         \newblock Semigroup Forum \textbf{73} (2006), 273--300.

\bibitem{HZ-12}
   \newblock {Y.~Huang and Z.-L.~Zhou},
      \newblock \textit{Two new recurrent levels for $C^0$-flows}.
         \newblock Acta Appl. Math. \textbf{118} (2012), 125--145.

\bibitem{KM}
   \newblock {E.~Kontorovich and M.~Megrelishvili},
   \newblock \textit{A note on sensitivity of semigroup actions}.
   \newblock {Semigroup Forum \textbf{76} (2008), 133-141}.

\bibitem{LY}
   \newblock {J.~Li and X.-D.~Ye},
   \newblock {\it Recent development of chaos theory in topological dynamics}.
   \newblock {Acta. Math. Sinica (Engl. Ser.) \textbf{32} (2016), 83--114}.


\bibitem{LY-75}
   \newblock {T.~Li and J.~Yorke},
      \newblock {\it Period $3$ implies chaos}.
         \newblock {Amer. Math. Monthly  \textbf{82} (1975), 985--992.}

\bibitem{Nemytskii-60}
   \newblock {V.V.~Nemytskii and V.V.~Stepanov},
      \newblock {\it Qualitative Theory of Differential Equations}.
         \newblock {Princeton University Press, Princeton, New Jersey 1960.}
\bibitem{Sigmund-77}
   \newblock {K.~Sigmund},
      \newblock {\it On minimal centers of attraction and generic points}.
         \newblock {J. Reine Angew. Math. \textbf{295} (1977), 72--79}.
\bibitem{WCF-15}
   \newblock {H.~Wang, Z.~Chen and H.~Fu},
      \newblock {\it M-systems and scattering systems of semigroup actions}.
         \newblock {Semigroup Forum  \textbf{91} (2015), 699--717.}


\bibitem{X}
   \newblock {J.~Xiong},
   \newblock {\it Chaos in topological transitive systems}.
   \newblock {Sci. China: Math. \textbf{48} (2005), 929--939}.

\bibitem{zh-ye}
   \newblock {X.-D.~Ye and R.~Zhang},
   \newblock {\it On sensitive sets in topological dynamics}.
   \newblock {Nonlinearity \textbf{21} (2008), 1601--1620}.
\end{thebibliography}
\end{document}